\title{On the least common multiple of binary linear recurrence sequences}
\author{\sc Sid Ali BOUSLA \\
Laboratoire de Mathématiques appliquées \\
Faculté des Sciences Exactes \\
Université de Bejaia, 06000 Bejaia, Algeria \\[1mm]
\href{mailto:bouslasidali@gmail.com}{bouslasidali@gmail.com} \\[1mm]
}
\date{}
\def\N{{\mathbb N}}
\def\lcm{\mathrm{lcm}}
\def\EMdash{\leavevmode\hbox to 10.6mm{\vrule height .63ex depth -.59ex
    width 10mm\hfill}}
\theoremstyle{plain}
\numberwithin{equation}{section}
\newtheorem{thm}{Theorem}[section]
\newtheorem{theorem}[thm]{Theorem}
\newtheorem{lemma}[thm]{Lemma}
\newtheorem{rmq}[thm]{Remark}
\newtheorem{coll}[thm]{Corollary}
\newcommand{\bi}{\genfrac{\{}{\}}{0pt}{}}
\begin{document}
\maketitle
\begin{abstract}
In this paper, we present a method for estimating the least common multiple of a large class of binary linear recurrence sequences. Let $P,Q,R_0$, and $R_1$ be fixed integers and let $\boldsymbol{R}=\left(R_n\right)_{n}$ be the recurrence sequence defined by $R_{n+2}=PR_{n+1}-QR_{n}$ $(\forall n\geq 0)$. Under some conditions on the parameters, we determine a rational nontrivial divisor for $L_{k,n}:=\lcm\left(R_k,R_{k+1},\dots,R_n\right)$, for all positive integers $n$ and $k$, such that $n\geq k$. As consequences, we derive nontrivial effective lower bounds for $L_{k,n}$ and we establish an asymptotic formula for $\log \left(L_{n,n+m}\right)$, where $m$ is a fixed positive integer. Denoting by $\left(F_n\right)_{n}$ the usual Fibonacci sequence, we prove for example that for any $m\geq 1$, we have
\[\log \lcm\left(F_{n},F_{n+1},\dots,F_{n+m}\right)\sim n(m+1)\log\Phi~~~~\text{as}~n\rightarrow +\infty,\]
where $\Phi$ denotes the golden ratio. We conclude the paper by some interesting identities and properties regarding the least common multiple of Lucas sequences.
\end{abstract}
\noindent\textbf{MSC 2010:} Primary 11A05, 11B39, 11B83; Secondary 11B65. \\
\textbf{Keywords:} Asymptotic formula, binary recurrence sequence, Fibonacci sequence, least common multiple, Lucas sequence.

\section{Introduction and Notation}
Throughout this paper, we let $\N^*$ denote the set $\N\setminus\lbrace 0\rbrace$ of positive integers. For $t\in\mathbb{R}$, we let $\lfloor t\rfloor$ and $\lceil t\rceil$ respectively denote the floor and the ceiling functions. For a given positive integer $n$ and given integers $a_1,a_2,\dots,a_n$ not all zero, we let $\gcd(a_1,a_2,\dots,a_n)$ and $\lcm(a_1,a_2,\dots,a_n)$ respectively denote the greatest common divisor and the least common multiple of $a_1,a_2,\dots,a_n$. We say that an integer $u$ is a multiple of a non-zero rational number $v$ (or equivalently, $v$ is a divisor of $u$) if the quotient $u/v$ is an integer. A sequence of non-zero integers $(a_n)_{n\geq 1}$ is said to be a \textit{divisibility sequence} if it satisfies, for all
$n,m\in\mathbb{N^*}$, the property: $n\mid m\Rightarrow a_n \mid a_m$. It is said to be a \textit{strong divisibility sequence} if it satisfies,
for all $n,m\in\mathbb{N^*}$, the stronger property: $\gcd(a_n,a_m)=\left|a_{\gcd(n,m)}\right|$. We shall use the Landau symbols $O$, $o$ and the Vinogradov symbols $\ll$, $\gg$ with their usual meanings.

On the studying of the distribution of primes, Chebyshev \cite{Chebyshev} showed that the prime number theorem is equivalent to stating that $\log\lcm(1,2,\dots,n){\sim}_{+\infty} n$. Since then, the estimates of the least common multiple of finite integer sequences became an important research problem. Recently, Hanson \cite{Hanson} and Nair \cite{Nair} respectively showed, by using elementary methods,  that $\lcm(1,2,\dots,n)\leq 3^n$ $(\forall n\in\N^*)$ and $\lcm(1,2,\dots,n)\geq 2^n$ $(\forall n\geq 7)$. In 2005, Farhi \cite{Farhi} proved that for all $u_0,r,n\in\mathbb{N^*}$ such that $\gcd(u_0,r)=1$, we have
\begin{equation}\label{arithprogr}
\lcm\left(u_0,u_0+r,\dots,u_0+nr\right)\geq u_0(1+r)^{n-1}.
\end{equation}   
More recently, Farhi \cite{Farhi4} managed to provide a $q$-analog of \eqref{arithprogr} by proving, for example, that for all $q,u_0,r\in\mathbb{N^*}$, such that $q\geq 2$ and $\gcd(u_0,r)=\gcd(u_0+r,q)=1$, we have
\begin{equation}\label{q-lcm}
\lcm\left(u_0+r{[1]}_{q},u_0+r{[2]}_{q},\dots,u_0+r{[n]}_{q}\right)\geq (u_0+r)A^{n-1}{q}^{\frac{(n-1)(n-4)}{4}},
\end{equation}
where ${[k]}_{q}:=\frac{q^k-1}{q-1}$ $(\forall k\in\mathbb{N^*})$, and $A>0$ is a computable constant depending on $q$, $u_0$, and $r$. 

In what follows, we let $\boldsymbol{R}=\left(R_n\right)_{n\in\mathbb{N}}$ be the binary recurrence sequence defined by:
\[R_{n}=PR_{n-1}-QR_{n-2}~~~~(\forall n\geq 2),\]
where $R_0,R_1,P$ and $Q$ are fixed integers such that $PQ\neq 0$, $\Delta:=P^2-4Q\neq 0$, and $\left|R_0\right|+\left|R_1\right|>0$. The number $\Delta$ is the discriminant of $\boldsymbol{R}$ and the companion polynomial of $\boldsymbol{R}$ is given by $P_{\boldsymbol{R}}(x)=x^2-Px+Q$. We denote by $\alpha$ and $\beta$ the distinct zeros of $P_{\boldsymbol{R}}$ such that $|\alpha|\geq |\beta|$. We have obviously $\alpha,\beta\neq 0$ and $|\alpha|\geq 1$ (since: $Q=\alpha\beta\in\mathbb{Z^*}$). It is well known that the terms $R_n$ can be written as:
\[R_n=a{\alpha}^n+b{\beta}^n~~~~(\forall n\in\mathbb{N}),\]
where $a:=\left(R_1-\beta R_0\right)/\left(\alpha-\beta\right)$ and $b:=\left(R_1-\alpha R_0\right)/\left(\beta-\alpha\right)$. Throughout the following, we assume that $\alpha/\beta$ is not a root of unity. The Lucas sequence (of the first kind) $U\left(P,Q\right)$ is given by:
\[U_n=\frac{{\alpha}^n-{\beta}^n}{\alpha-\beta}~~~~(\forall n\in\mathbb{N}),\]
and satisfies the recurrence relation: $U_{n+2}=PU_{n+1}-QU_n$ $(\forall n\in\mathbb{N})$, where $U_0=0$ and $U_1=1$. Note that $U\left(P,Q\right)$ is a divisibility sequence (see, e.g., \cite{Rib}, Eq. (2.10)). Moreover, if $P$ and $Q$ are coprime, then $U\left(P,Q\right)$ is a strong divisibility sequence (see, e.g., \cite{Rib}, Eq. (2.11)). For any natural number $j$, we let ${[j]}_{\boldsymbol{U}}!$ be the integer defined by: ${[j]}_{\boldsymbol{U}}!:=U_{1}U_{2}\cdots U_{j}$ (with the convention ${[0]}_{\boldsymbol{U}}!=1$). For all positive integers $n$ and $k$, such that $n\geq k$, we let ${\binom{n}{k}}_{\boldsymbol{U}}$ be the \textit{$\boldsymbol{U}$-binomial coefficient}, defined by:
\[{\binom{n}{k}}_{\boldsymbol{U}}:=\frac{U_nU_{n-1}\cdots U_{n-k+1}}{U_1U_2\cdots U_{k}}=\frac{{[n]}_{\boldsymbol{U}}!}{{[k]}_{\boldsymbol{U}}!{[n-k]}_{\boldsymbol{U}}!}.\]
Actually, if $P$ and $Q$ are coprime, these numbers are all integers (see, e.g., \cite{Bousla}, Prop. 1). By taking $(R_0,R_1)=(0,1)$, the sequence $\boldsymbol{R}$ is simply reduced to the Lucas sequence $U\left(P,Q\right)$; if in addition $(P,Q)=(1,-1)$ then $\boldsymbol{R}$ becomes the usual Fibonacci sequence defined by: $F_0=0$, $F_1=1$, and $F_{n+2}=F_{n+1}+F_n$ $(\forall n\in\mathbb{N})$.  

In 1986, Matiyasevich and Guy \cite{Mat} proved the interesting formula:
\begin{equation}\label{matiy}
\lim_{n\rightarrow +\infty}\frac{\log\left(F_1F_2\cdots F_n\right)}{\log\lcm\left(F_1,F_2,\dots,F_n\right)}=\frac{{\pi}^2}{6}.
\end{equation} 
In 1989, Kiss and M\'aty\'as \cite{kiss} showed that in \eqref{matiy} the Fibonacci sequence can be replaced by any sequence of the form $\left|U\left(P,Q\right)\right|$, with $\gcd\left(P,Q\right)=1$. In 1990, Akiyama \cite{ak1} proved more generally that for $R_0=0$ and $R_1\neq 0$, we have
\begin{equation}\label{akiy}
\lim_{n\rightarrow +\infty}\frac{\log\left|R_1R_2\cdots R_n\right|}{\log\lcm\left(R_1,R_2,\dots,R_n\right)}=\frac{{\pi}^2}{6(1-\kappa)},
\end{equation} 
where $\kappa:=\log\left(\gcd\left(P^2,Q\right)\right)/2\log |\alpha|$. In 2013, Akiyama and Luca \cite{ak4} managed to estimate the logarithm of the least common multiple of several Lucas subsequences. Quite recently, Sanna \cite{carlo2} showed that for any periodic sequence $\boldsymbol{s}=(s_n)_{n\geq 1}$ in $\lbrace−1,+1\rbrace$ there exists a positive computable rational number $C_{\boldsymbol{s}}$ such that:
\begin{equation}\label{saan}
\log\lcm(F_3+s_3,F_4+s_4,\dots,F_n+s_n){\sim}_{+\infty}\frac{3\log \Phi}{{\pi}^2}C_{\boldsymbol{s}}n^2~~~~\text{as}~n\rightarrow +\infty,
\end{equation}
where $\Phi$ denotes the golden ratio $(\Phi:=(1+\sqrt{5})/2)$. On the other hand, Bousla and Farhi \cite{Bousla,Bousfarcras} obtained several interesting identities dealing with the least common multiple of strong divisibility sequences. These latter deduced effective bounds for the least common multiple of Lucas sequences. In particular, they proved that for all $n\geq 1$, we have 
\begin{equation}
\Phi^{\frac{n^2}{4}-\frac{9}{4}}\leq\lcm\left(F_1,F_2,\dots,F_n\right)\leq\Phi^{\frac{n^2}{3}+\frac{4n}{3}}.
\end{equation}

For other types of sequences such as polynomial sequences or arithmetic progressions, there are also many works studying their least common multiple (see, e.g., \cite{bat,Bousfar1,arithpro,cil,Farhi,Hong}).

This paper is devoted to studying the numbers $L_{k,n}:=\lcm\left(R_k,R_{k+1},\dots,R_n\right)$, where $k$ and $n$ are positive integers such that $n\geq k$. Precisely, we determine a rational nontrivial divisor for $L_{k,n}$ and then we derive (as consequences) nontrivial lower bounds for $L_{k,n}$. We also establish an asymptotic formula for $\log \left(L_{n,n+m}\right)$, when $m$ is a fixed positive integer and $n$ tends to infinity. For Lucas sequences $U\left(P,Q\right)$, we obtain several identities dealing with their least common multiple. In fact, these identities are stronger than those given in \cite{Bousla,Bousfarcras} for the case of Lucas sequences. We close the paper by some concluding remarks and open questions.

\section{Main results}\label{sec2}

Let $P,Q,R_0$, and $R_1$ be fixed integers and suppose that $PQ,\Delta\neq 0$, $|R_0|+|R_1|>0$, and $\gcd\left(P,Q\right)=\gcd\left(R_1,Q\right)=1$. Our main results are given in the following:

\begin{theorem}\label{T1}
Let $n$ and $k$ be two positive integers such that $n\geq k$. Then the integer $L_{k,n}:=\lcm\left(R_k,R_{k+1},\dots,R_n\right)$ is a multiple of the rational number
\[\frac{R_k R_{k+1}\cdots R_n}{[n-k]_{\boldsymbol{U}}!\left(\gcd\left(R_0,R_1\right)\right)^{n-k}}.\]
\end{theorem}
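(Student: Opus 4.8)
The plan is to prove the claimed divisibility one prime at a time, after a preliminary reduction that accounts for the factor $\left(\gcd(R_0,R_1)\right)^{n-k}$. Write $d:=\gcd(R_0,R_1)$ and $R_n=dR_n'$. The sequence $\boldsymbol{R}'=(R_n')_n$ satisfies the same recurrence with $\gcd(R_0',R_1')=1$, and still $\gcd(R_1',Q)=1$, while the Lucas sequence $\boldsymbol{U}$ (which depends only on $P,Q$) is unchanged. Since $v_p\!\left(\lcm(dR_k',\dots,dR_n')\right)=v_p(d)+v_p\!\left(\lcm(R_k',\dots,R_n')\right)$ for every prime $p$ (here $v_p$ is the $p$-adic valuation), one gets $L_{k,n}=d\cdot\lcm(R_k',\dots,R_n')$, and the statement reduces to the case $d=1$: that $\lcm(R_k,\dots,R_n)$ is a multiple of $R_kR_{k+1}\cdots R_n/[n-k]_{\boldsymbol{U}}!$. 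Fixing a prime $p$ and setting $m:=n-k$, and using $v_p(L_{k,n})=\max_{0\le i\le m}v_p(R_{k+i})$, this is equivalent to the valuation inequality
\[\sum_{i=0}^{m}v_p(R_{k+i})-\max_{0\le i\le m}v_p(R_{k+i})\ \le\ \sum_{i=1}^{m}v_p(U_i),\]
which is what I would establish for each $p$.

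The engine consists of two elementary facts, both coming from the Binet form $R_n=a\alpha^n+b\beta^n$. First, the addition formula $R_{m+n}=U_{n+1}R_m-QU_nR_{m-1}$, which I would verify by substituting the closed forms of $\boldsymbol{R}$ and $\boldsymbol{U}$ and using $\alpha\beta=Q$. Second, in the reduced case $d=1$, no prime $p\nmid Q$ divides two consecutive terms $R_n,R_{n+1}$: if it did, then from $QR_{n-1}=PR_n-R_{n+1}$ and $p\nmid Q$ one would get $p\mid R_{n-1}$, and descending would yield $p\mid\gcd(R_0,R_1)=1$, a contradiction. Together these give the crucial transfer lemma: if $p\nmid Q$ and $p^t\mid R_{k+i}$, then $p\nmid R_{k+i+1}$, so $R_{k+i+1}$ is a unit modulo $p^t$, and the addition formula based at $k+i$, namely $R_{k+i+j}=U_jR_{k+i+1}-QU_{j-1}R_{k+i}$, gives $R_{k+i+j}\equiv U_jR_{k+i+1}\pmod{p^t}$; hence for every $j\ge1$ one has $p^t\mid R_{k+i+j}\iff p^t\mid U_j$.

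The counting then runs as follows. For $p\nmid Q$ let $\tau_t$ be the rank of apparition of $p^t$ in $\boldsymbol{U}$, i.e. the least positive index with $p^t\mid U_{\tau_t}$; applying the same transfer argument to $\boldsymbol{U}$ itself (where $\gcd(U_0,U_1)=1$) shows $p^t\mid U_i\iff\tau_t\mid i$, so that $\sum_{i=1}^{m}v_p(U_i)=\sum_{t\ge1}\lfloor m/\tau_t\rfloor$. On the $\boldsymbol{R}$-side put $s_t:=\#\{0\le i\le m:\ p^t\mid R_{k+i}\}$. If $s_t>0$, the transfer lemma forces these indices to form an arithmetic progression of common difference $\tau_t$, whence $s_t\le\lfloor m/\tau_t\rfloor+1$. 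Writing $M:=\max_i v_p(R_{k+i})$ and noting $s_1\ge s_2\ge\cdots\ge s_M\ge1$, the left-hand side of the target inequality equals $\sum_{t\ge1}s_t-M=\sum_{t=1}^{M}(s_t-1)\le\sum_{t=1}^{M}\lfloor m/\tau_t\rfloor\le\sum_{i=1}^{m}v_p(U_i)$, exactly as needed. The remaining primes $p\mid Q$ are immediate: since $\gcd(P,Q)=\gcd(R_1,Q)=1$ one has $R_j\equiv P^{j-1}R_1$ and $U_j\equiv P^{j-1}\pmod p$, so $p$ divides none of $R_k,\dots,R_n$ (as $k\ge1$) nor any $U_i$, and both sides vanish. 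Assembling the inequality over all primes proves the theorem.

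The difficulty one expects a priori — the delicate $p$-adic behaviour of a general, non-Lucas recurrence, especially at $p=2$ or at primes dividing $\Delta$ — is precisely what the transfer lemma circumvents: being derived purely from the algebraic addition formula together with the descent argument, it applies uniformly to every $p\nmid Q$ with no case distinction on how $p$ behaves in $\mathbb{Q}(\sqrt{\Delta})$. The genuine content to pin down is therefore (i) the exact addition formula and (ii) the assertion that the ``divisible by $p^t$'' indices of $\boldsymbol{R}$ inherit the common difference $\tau_t$ of $\boldsymbol{U}$; once these hold the counting is forced. The one point demanding care in the bookkeeping is that the accumulated slack $\sum_{t\le M}1=M$ is cancelled exactly by the subtracted maximum, which is what makes the estimate sharp enough to produce $[n-k]_{\boldsymbol{U}}!$ in the denominator rather than a larger quantity.
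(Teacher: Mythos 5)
Your proof is correct, but it takes a genuinely different route from the paper's. The paper proceeds globally: it applies a partial-fraction identity (its Lemma \ref{L2}) to the numbers $z_i=R_i/U_i$, uses $R_m=R_1U_m-R_0QU_{m-1}$ and an explicit product evaluation to derive the two identities
\[
\frac{R_{0}^{n-k}}{R_k\cdots R_n}=\sum_{j=k}^{n}\frac{(-1)^{n-j}U_j^{n-k}}{Q^{f(j,k,n)}{[j-k]}_{\boldsymbol{U}}!\,{[n-j]}_{\boldsymbol{U}}!}\cdot\frac{1}{R_j}
\]
and its companion with $R_1$, then clears denominators against $L_{k,n}$ and removes the powers of $Q$ via $\gcd(R_m,Q)=\gcd(U_m,Q)=1$. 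You instead argue locally, prime by prime, via the addition formula, the rank of apparition $\tau_t$ of $p^t$, and the observation that the indices $i$ with $p^t\mid R_{k+i}$ form an arithmetic progression of difference $\tau_t$, so that $\sum_t(s_t-1)\le\sum_t\lfloor m/\tau_t\rfloor=\sum_{i=1}^m v_p(U_i)$. Both arguments are sound; note, though, that the author's explicit aim (see the concluding remarks, where Farhi's question about avoiding ``prime number arguments'' is discussed) was precisely to bypass the $p$-adic analysis, whereas yours is the classical valuation-theoretic proof. What the paper's method buys is the reusable identities of Lemma \ref{4}, which feed into several later results; what yours buys is conceptual transparency (it isolates exactly where the hypotheses $\gcd(P,Q)=\gcd(R_1,Q)=1$ enter, namely to dispose of the primes dividing $Q$) and a uniform treatment of all $p\nmid Q$ with no case distinction at $p=2$ or at primes dividing $\Delta$. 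Two small points to tidy up: (i) you should dispose of the degenerate case where some $R_j=0$ for $k\le j\le n$ (the valuations become infinite, but the statement is then vacuous, as the paper notes at the start of its proof); and (ii) when $s_t=1$ the rank $\tau_t$ need not exist, but then $s_t-1=0$ and the bound holds trivially --- worth a half-sentence so the counting step is airtight.
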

\begin{rmq}
Theorem \ref{T1} generalizes the result of Farhi (\cite{Farhi4}, Theorem 1.1), obtained during the proof of \eqref{q-lcm} for the special case: $P=q+1$, $Q=q$, and $R_1>R_0\geq 1$, with $q\geq 2$ is an integer and $\gcd\left(R_0,R_1\right)=1$.
\end{rmq}

\begin{theorem}\label{T2}
Let $c,d\in\mathbb{N^*}$ be fixed and suppose that $P,Q\in\mathbb{N^*}$, and $\Delta>0$. Then for all $n,m\in\mathbb{N^*}$ such that $m\leq\left\lfloor \frac{n}{2}\right\rfloor$, we have
\[\lcm\left(cU_{m+1}+dU_{m},cU_{m+2}+dU_{m+1},\dots,cU_{n+1}+dU_{n}\right)\geq (c\alpha+d)\left(\frac{c\alpha+d}{\alpha\gcd(c,d)}\right)^{\frac{n}{2}} {\alpha}^{\frac{n^2}{4}}.\]
\end{theorem}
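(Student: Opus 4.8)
The plan is to package the whole statement into a single auxiliary sequence. Put $S_k:=cU_{k+1}+dU_k$; a one-line use of the recurrence for $\boldsymbol U$ gives $S_{k+2}=PS_{k+1}-QS_k$, so $\boldsymbol S=(S_k)_{k\ge 0}$ is a binary recurrence with the same companion polynomial, with $S_0=c$, $S_1=cP+d$, whence $\gcd(S_0,S_1)=\gcd(c,d)$. Writing $S_k=\frac{(c\alpha+d)\alpha^k-(c\beta+d)\beta^k}{\alpha-\beta}$ and using that $P,Q\in\N^*$ with $\Delta>0$ force $\alpha>\beta>0$ and $\alpha>1$, each $S_k$ is a positive integer, and the object to be estimated is exactly $\lcm(S_m,\dots,S_n)$. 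Since $\gcd(c,d)$ divides every $S_k$, I would first factor it out: replacing $(c,d)$ by $(c/\gcd(c,d),\,d/\gcd(c,d))$ rescales both sides by $\gcd(c,d)$ and reduces to the case $\gcd(c,d)=1$.

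Two observations then drive the estimate. First, as $m$ grows the index set $\{m,\dots,n\}$ shrinks, so $\lcm(S_m,\dots,S_n)$ is a multiple of $\lcm(S_{\lfloor n/2\rfloor},\dots,S_n)$ for every admissible $m$; because the target is independent of $m$, it suffices to treat the single worst case $m=\lfloor n/2\rfloor$. This monotonicity is essential: the lower bound produced by the method below is concave in $m$, exceeding the target by a factor of order $\alpha^{n-1}$ at $m=\lfloor n/2\rfloor$ but falling well below it near $m=1$, so the small-$m$ cases must be discarded by the subset inequality rather than bounded head-on. Second, granting provisionally a lower bound of the clean shape $\lcm(S_m,\dots,S_n)\ge \prod_{k=m}^nS_k\big/[n-m]_{\boldsymbol U}!$, I would estimate the right-hand side below by the elementary inequalities $S_k\ge(c\alpha+d)U_k$ (from $U_{k+1}\ge\alpha U_k$) and $U_k\ge\alpha^{k-1}$, which telescope through $\frac{\prod_{k=m}^nU_k}{[n-m]_{\boldsymbol U}!}=U_m\prod_{j=1}^{n-m}\frac{U_{m+j}}{U_j}\ge\alpha^{m(n-m+1)-1}$, using $U_{m+j}/U_j\ge\alpha^m$. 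Substituting $m=\lfloor n/2\rfloor$ and comparing the exponents of $c\alpha+d$ and of $\alpha$ (both parities of $n$ work out) then yields the stated bound.

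The main obstacle is justifying the clean divisor, which is \emph{not} simply Theorem~\ref{T1} applied to $\boldsymbol S$: that theorem requires $\gcd(S_1,Q)=\gcd(cP+d,Q)=1$, which genuinely fails here (already $P=3$, $Q=2$, $c=d=1$ give $S_1=4$), and indeed a prime $p\mid Q$ can divide every $S_k$, so the naive divisor overshoots. The divisor must therefore be rebuilt prime by prime. For $p\nmid Q$ the sequence $\boldsymbol U$ is a strong divisibility sequence with a well-defined rank of apparition, and the valuation argument behind Theorem~\ref{T1} bounds the defect $\sum_k v_p(S_k)-\max_k v_p(S_k)$ by $v_p([n-m]_{\boldsymbol U}!)$; for $p\mid Q$ one uses $p\nmid P$ and $S_k\equiv P^{k-1}S_1\pmod{p^{v_p(Q)}}$ to show $v_p(S_k)$ is eventually constant, so the extra $Q$-supported factor is only of size $\alpha^{O(n)}$. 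Where I expect the real difficulty to lie is that this $Q$-factor, the surplus from the analytic estimate, and the looseness of replacing the true defect by $[n-m]_{\boldsymbol U}!$ are all of the same exponential order $\alpha^{O(n)}$, sitting on top of a leading term of size roughly $\alpha^{n^2/4}$; the inequality is thus tight, and these defects must be accounted for simultaneously and sharply rather than through separate crude bounds.
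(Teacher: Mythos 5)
Your plan is, in outline, exactly the paper's proof. The paper sets $R_0=c$, $R_1=cP+d$, notes via Lemma \ref{eqdeR} that $R_t=cU_{t+1}+dU_t$ and $\gcd(R_0,R_1)=\gcd(c,d)$, bounds $R_t\geq(c\alpha+d)U_t$ using $U_{t+1}\geq\alpha U_t$ (Lemma \ref{p1p2}), invokes Theorem \ref{T1} together with \eqref{inc} to reduce the problem to maximizing $\left(\frac{c\alpha+d}{\gcd(c,d)}\right)^{n-k+1}U_k{\binom{n}{k}}_{\boldsymbol{U}}$ over $m\leq k\leq n$, and then evaluates at $k=\left\lfloor n/2\right\rfloor$ via Lemma \ref{estU2}. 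All of that matches your sketch, including the point that only the choice $k=\left\lfloor n/2\right\rfloor$ needs to be retained and that the hypothesis $m\leq\left\lfloor n/2\right\rfloor$ is what makes this choice admissible.

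The divergence is precisely at the step you call the main obstacle. The paper simply applies Theorem \ref{T1}, which is stated under the standing hypotheses of Section \ref{sec2}; these include $\gcd(R_1,Q)=1$, i.e.\ here $\gcd(cP+d,Q)=1$. You are right that this condition is not implied by the hypotheses displayed in Theorem \ref{T2}: your example $P=3$, $Q=2$, $c=d=1$ gives $R_1=4$ with $\gcd(R_1,Q)=2$, and there $\lcm(R_1,R_2)=\lcm(4,10)=20$ is not a multiple of $R_1R_2/{[1]}_{\boldsymbol{U}}!=40$, so the divisor of Theorem \ref{T1} genuinely fails without that hypothesis. This is a legitimate criticism of the statement of Theorem \ref{T2} as printed. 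However, as a proof attempt your text does not close the gap you open: the prime-by-prime reconstruction of the divisor for primes dividing $Q$ is only sketched, and you concede yourself that the resulting defect is of the same exponential order as the available slack, without showing that the inequality survives. Relative to the paper, the accurate summary is therefore: identical strategy throughout, but where the paper cites Theorem \ref{T1} (legitimately under the unstated assumption $\gcd(cP+d,Q)=1$, and not otherwise), your argument stops. To make it complete you should either add the hypothesis $\gcd(cP+d,Q)=1$ and quote Theorem \ref{T1} as the paper does, or actually carry out and quantify the valuation argument you propose, showing that the $Q$-supported loss is absorbed by the extra factor $\alpha^{O(n)}$ your sharper estimate $U_t\geq\alpha^{t-1}$ provides --- which is not automatic, since that loss can exceed $\alpha^{n}$ when $Q$ is large.
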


\begin{theorem}\label{T6}
Suppose that $P>0$, $Q<0$, and $R_0,R_1\in\mathbb{N^*}$. Then for all $n,m\in\mathbb{N^*}$ such that $n\geq 2$ and $m\leq\left\lfloor \frac{n+1}{2}\right\rfloor+1$, we have
\[\lcm\left(R_m,R_{m+1},\dots,R_n\right)\geq \gcd\left(R_0,R_1\right)\left(\frac{R_1+R_0\left|\beta\right|}{\gcd\left(R_0,R_1\right)}\right)^{\frac{n-1}{2}}{\alpha}^{\frac{n^2}{4}-\frac{n}{2}-\frac{7}{4}}.\] 
\end{theorem}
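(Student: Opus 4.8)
The plan is to combine the rational divisor furnished by Theorem~\ref{T1} with the monotonicity of the lcm under inclusion of blocks, and then to carry out elementary Binet-type estimates. Since $P>0$, $Q<0$ and $R_0,R_1\in\N^*$, the relation $R_i=PR_{i-1}-QR_{i-2}=PR_{i-1}+|Q|R_{i-2}$ shows by induction that $R_i>0$ for every $i$; moreover $\alpha,\beta$ are real with $\alpha>0>\beta$, $\alpha>|\beta|$, $\alpha\ge1$, and $\alpha-\beta=\sqrt\Delta$. Writing $g:=\gcd(R_0,R_1)$ and recalling $a=(R_1-\beta R_0)/(\alpha-\beta)>0$, one has the identity $R_1+R_0|\beta|=a(\alpha-\beta)$, which is precisely the quantity occurring in the statement. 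This already signals that the target factor $\bigl((R_1+R_0|\beta|)/g\bigr)^{(n-1)/2}$ must be produced by a product of the $R_i$ over a range of length $\approx n/2$.

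First I would set $m':=\lfloor\frac{n+1}2\rfloor+1$. The hypotheses $m\le\lfloor\frac{n+1}2\rfloor+1$ and $n\ge2$ guarantee $m\le m'\le n$, so $\{R_{m'},\dots,R_n\}\subseteq\{R_m,\dots,R_n\}$, and, the $R_i$ being positive, $\lcm(R_m,\dots,R_n)\ge\lcm(R_{m'},\dots,R_n)$. Applying Theorem~\ref{T1} to the block $[m',n]$ then gives
\[
\lcm(R_m,\dots,R_n)\ \ge\ \frac{R_{m'}R_{m'+1}\cdots R_n}{[n-m']_{\boldsymbol{U}}!\,g^{\,n-m'}}.
\]
The reason for this particular $m'$ is that, with $s:=n-m'+1$ terms, the power of $\alpha$ surviving in this ratio equals $m's=\frac{n^2}4+\frac n2-O(1)$, i.e.\ it already exceeds the required exponent $\frac{n^2}4-\frac n2-\frac74$ by a surplus that is a factor $\alpha^{\,n+1}$ or larger.

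It then remains to estimate the two pieces. For the denominator I would use $U_i=\frac{\alpha^i-\beta^i}{\alpha-\beta}\le\frac{\alpha^i\bigl(1+(|\beta|/\alpha)^i\bigr)}{\alpha-\beta}$, whence $[n-m']_{\boldsymbol{U}}!\le\frac{\alpha^{\binom{s}{2}}}{(\alpha-\beta)^{\,s-1}}\prod_{i\ge1}\bigl(1+(|\beta|/\alpha)^i\bigr)$, the last product converging to a finite constant. For the numerator I would write $R_i=a\alpha^i\bigl(1+\tfrac ba(\beta/\alpha)^i\bigr)$ and bound $R_i\ge a\alpha^i\bigl(1-\tfrac{|b|}a(|\beta|/\alpha)^i\bigr)$; a short computation shows $\tfrac{|b|}a(|\beta|/\alpha)^2<1$ in all cases, so every factor with $i\ge2$ is positive and $\prod_{i=m'}^nR_i\ge a^{\,s}\alpha^{\sum_{i=m'}^n i}\,\kappa$ with $\kappa:=\prod_{i\ge2}\bigl(1-\tfrac{|b|}a(|\beta|/\alpha)^i\bigr)>0$ a constant independent of $n$. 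Substituting $a(\alpha-\beta)=R_1+R_0|\beta|$ and using $(R_1+R_0|\beta|)/g\ge R_1/g\ge1$ to replace the exponent $s=\lceil\frac{n-1}2\rceil$ by $\frac{n-1}2$, everything collapses to an inequality of the shape $\text{(constant)}\cdot\alpha^{m's}\ge\alpha^{\frac{n^2}4-\frac n2-\frac74}$, the surplus power of $\alpha$ absorbing the bounded constants for every $n\ge2$.

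The main obstacle is the numerator estimate: because $Q<0$ forces $\beta<0$, the Binet terms $b\beta^i$ oscillate in sign, so the correction factors $1+\tfrac ba(\beta/\alpha)^i$ straddle $1$ and cannot simply be discarded as they can when $\beta>0$ (as in Theorem~\ref{T2}). The decisive point is that the chosen block begins at $m'\approx n/2$, so all indices that occur are large and the offending factors are uniformly close to $1$; quantifying this through the convergent product $\kappa$, and then checking that the resulting sequence-dependent constant (together with the factor $\sqrt\Delta=\alpha-\beta$) is dominated by the surplus power of $\alpha$ for all $n\ge2$, is exactly what the slack $-\frac74$ in the exponent and the hypothesis $n\ge2$ are there to accommodate.
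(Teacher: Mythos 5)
Your skeleton is the same as the paper's: restrict to the block starting at $\ell_n:=\lfloor\frac{n+1}{2}\rfloor+1$, apply Theorem~\ref{T1} there, and estimate the numerator and the $\boldsymbol{U}$-factorial separately. But the way you estimate the individual terms $R_i$ leaves a genuine gap. You lower-bound $\prod_{i=m'}^{n}R_i$ by $a^{s}\alpha^{\sum i}\kappa$ with $\kappa=\prod_{i\ge2}\bigl(1-\tfrac{|b|}{a}(|\beta|/\alpha)^i\bigr)$, and upper-bound $[s-1]_{\boldsymbol{U}}!$ at the cost of a factor $C=\prod_{i\ge1}\bigl(1+(|\beta|/\alpha)^i\bigr)$, then claim the surplus $\alpha^{\,n+1}$ in the exponent absorbs $C(\alpha-\beta)/\kappa$ for every $n\ge2$. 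That last claim is false in general. The theorem is asserted for all $n\ge 2$, and at $n=2$ the surplus is only $\alpha^{3}$, whereas $C$ and $1/\kappa$ are sequence-dependent constants that can be arbitrarily large relative to any fixed power of $\alpha$: take $P=1$, $Q=-N$, $R_0=R_1=1$ with $N$ large, so that $|\beta|/\alpha\to1$; then $C\ge\exp\bigl(c\sum_i(|\beta|/\alpha)^i\bigr)$ grows like $e^{c\sqrt N}$ and the early factors of $\kappa$ (e.g.\ $1-\tfrac{|b|}{a}(|\beta|/\alpha)^2\approx 3/\sqrt N$) make $\kappa$ minuscule, while $\alpha^{3}\approx N^{3/2}$. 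So your argument only yields the stated inequality for $n$ large (depending on the sequence), not for all $n\ge2$. Relatedly, your remark that ``all indices in the block are large, so the offending factors are uniformly close to $1$'' is only an asymptotic statement; at $n=2$ the block starts at $m'=2$.

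The paper avoids Binet's formula and its oscillating corrections altogether. By Lemma~\ref{eqdeR}, $R_m=R_1U_m-R_0QU_{m-1}=R_1U_m+R_0\alpha|\beta|\,U_{m-1}$, and since $Q<0$ both summands are positive; combined with $U_t\ge\alpha^{t-2}$ (Lemma~\ref{terrr}, valid here since $\Delta=P^2-4Q>0$) this gives the clean, constant-free bound $R_m\ge(R_1+R_0|\beta|)\alpha^{m-2}$ for $m\ge2$ (Lemma~\ref{min}), and $U_t\le\alpha^{t}$ handles the denominator. With these there are no convergent products to absorb, and the exponent bookkeeping closes for every $n\ge2$ exactly with the slack $-\tfrac74$. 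If you replace your two Binet estimates by these, the rest of your argument goes through; as written, the final absorption step fails.
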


\begin{coll}\label{fibo}
Let $c$ and $d$ be two fixed positive integers. Then for all $n,m\in\mathbb{N^*}$ such that $n\geq 2$ and $m\leq \left\lfloor\frac{n+1}{2}\right\rfloor+1$, we have
\[\lcm\left(cF_{m}+dF_{m-1},cF_{m+1}+dF_{m},\dots,cF_{n}+dF_{n-1}\right)\geq \gcd(c,d)\left(\frac{c \Phi +d}{\Phi\gcd(c,d)}\right)^{\frac{n-1}{2}}{\Phi}^{\frac{n^2}{4}-\frac{n}{2}-\frac{7}{4}},\]
where $\Phi$ denotes the golden ratio $(\Phi:=(1+\sqrt{5})/2)$.
\end{coll}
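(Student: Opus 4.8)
The plan is to recognize Corollary~\ref{fibo} as the direct specialization of Theorem~\ref{T6} to the Fibonacci parameters. First I would set \(P=1\), \(Q=-1\), \(R_0=d\), and \(R_1=c\), so that the governing recurrence \(R_{n}=PR_{n-1}-QR_{n-2}\) becomes the Fibonacci recurrence \(R_{n}=R_{n-1}+R_{n-2}\). The one genuine task is to verify the closed form \(R_n=cF_n+dF_{n-1}\) for every \(n\ge 1\): this holds because both \((F_n)_n\) and \((F_{n-1})_n\) satisfy the Fibonacci recurrence, so their linear combination does too, and one only needs to check the base cases \(R_1=cF_1+dF_0=c\) and \(R_2=cF_2+dF_1=c+d\), the latter agreeing with \(R_1+R_0\). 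Consequently the left-hand side \(\lcm(R_m,\dots,R_n)\) of Theorem~\ref{T6} is exactly the least common multiple appearing in the corollary.

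Next I would confirm that all the standing hypotheses hold for this choice. The blanket conditions of Section~\ref{sec2} are satisfied: \(PQ=-1\neq 0\), \(\Delta=P^2-4Q=5\neq 0\), \(|R_0|+|R_1|=c+d>0\), \(\gcd(P,Q)=\gcd(1,-1)=1\), and \(\gcd(R_1,Q)=\gcd(c,-1)=1\); moreover \(\alpha/\beta\) is not a root of unity since \(|\alpha/\beta|=\Phi^2\neq 1\). The specific hypotheses of Theorem~\ref{T6} also hold, because \(P=1>0\), \(Q=-1<0\), and \(R_0=d,\,R_1=c\in\mathbb{N}^*\). Since the index restriction \(n\ge 2\) and \(m\le\lfloor(n+1)/2\rfloor+1\) is identical in both statements, Theorem~\ref{T6} applies verbatim.

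Finally I would evaluate the right-hand side of Theorem~\ref{T6} for these parameters. The characteristic roots of \(x^2-x-1\) are \(\alpha=\Phi\) and \(\beta=(1-\sqrt5)/2\), so \(\alpha=\Phi\) and \(|\beta|=1/\Phi=\Phi-1\). Then \(\gcd(R_0,R_1)=\gcd(c,d)\), and the quantity \(R_1+R_0|\beta|\) simplifies, using \(|\beta|=1/\Phi\), to \(c+d/\Phi=(c\Phi+d)/\Phi\); hence \(\dfrac{R_1+R_0|\beta|}{\gcd(R_0,R_1)}=\dfrac{c\Phi+d}{\Phi\gcd(c,d)}\). Substituting \(\alpha=\Phi\), these two simplifications, and the unchanged exponents \(\tfrac{n-1}{2}\) and \(\tfrac{n^2}{4}-\tfrac{n}{2}-\tfrac{7}{4}\) into the bound of Theorem~\ref{T6} yields precisely the inequality claimed in the corollary.

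The argument is essentially a substitution, so no serious obstacle arises; the only points demanding a little care are the algebraic identity \(R_1+R_0|\beta|=(c\Phi+d)/\Phi\), which rests on the numerical fact \(|\beta|=1/\Phi\) particular to the Fibonacci discriminant, and the correct matching of the initial data \((R_0,R_1)=(d,c)\) to the shifted closed form \(cF_n+dF_{n-1}\).
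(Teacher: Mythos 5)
Your proposal is correct and follows exactly the paper's own route: specialize Theorem \ref{T6} with $(P,Q)=(1,-1)$, $(R_0,R_1)=(d,c)$, identify $R_n=cF_n+dF_{n-1}$ (the paper cites Lemma \ref{eqdeR} where you use a direct induction, but it is the same identity), and substitute $\alpha=\Phi$, $|\beta|=1/\Phi$ into the bound. You simply spell out the hypothesis checks and the simplification $R_1+R_0|\beta|=(c\Phi+d)/\Phi$ that the paper leaves implicit.
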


\begin{theorem}\label{T3}
Let $q\geq 2$ be a fixed integer and $u_0,r\in\mathbb{N^*}$, with $\gcd\left(u_0+r,q\right)=1$. Let also $\left(u_n\right)_{n\geq 0}$ be the $q$-arithmetic sequence of parameters $u_0$ and $r$, that is $u_n=u_0+r{[n]}_{q}$ $(\forall n\in\mathbb{N^*})$. Then for all positive integers $n$ and $m$ such that $m\leq\left\lfloor \frac{n}{2}\right\rfloor$, we have
\[\lcm\left(u_m,u_{m+1},\dots,u_n\right)\geq \gcd(u_0,r)\left(\frac{r}{\gcd(u_0,r)}\right)^{\frac{n}{2}+1} {q}^{\frac{n(n-2)}{4}}.\]
\end{theorem}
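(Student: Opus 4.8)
The plan is to recognize the $q$-arithmetic sequence as a binary recurrence and then apply Theorem~\ref{T1} together with a monotonicity trick. First I would set $P=q+1$ and $Q=q$, so that the companion polynomial factors as $(x-q)(x-1)$, giving $\alpha=q$, $\beta=1$, and the associated Lucas sequence $\boldsymbol{U}=U(q+1,q)$ with $U_n=\frac{q^n-1}{q-1}=[n]_q$. Taking $R_0=u_0$ and $R_1=u_0+r$ yields $R_n=u_0+rU_n=u_n$, so $(u_n)$ is exactly the sequence $\boldsymbol{R}$ attached to these parameters. Since $\gcd(P,Q)=\gcd(q+1,q)=1$ and $\gcd(R_1,Q)=\gcd(u_0+r,q)=1$ by hypothesis, Theorem~\ref{T1} is applicable; note also that $\gcd(R_0,R_1)=\gcd(u_0,u_0+r)=\gcd(u_0,r)=:g$.

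The essential idea is \emph{not} to apply Theorem~\ref{T1} on the full range $[m,n]$: because $u_j>0$, the number $L_{m,n}=\lcm(u_m,\dots,u_n)$ is a positive integer multiple of $L_{k,n}:=\lcm(u_k,\dots,u_n)$ for every $k$ with $m\le k\le n$, hence $L_{m,n}\ge L_{k,n}$. I would choose $k=\lfloor n/2\rfloor$, which is admissible since $m\le\lfloor n/2\rfloor=k\le n$ for $n\ge 2$ (the case $n=1$ being vacuous). Applying Theorem~\ref{T1} to $L_{k,n}$ gives
\[L_{m,n}\ \ge\ L_{k,n}\ \ge\ \frac{u_k u_{k+1}\cdots u_n}{[n-k]_{\boldsymbol{U}}!\,g^{\,n-k}}.\]

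For the estimation I would drop the positive term $u_0$, writing $u_j\ge rU_j$, so that the right-hand side is at least $\frac{r^{\,n-k+1}}{g^{\,n-k}}\cdot\frac{U_kU_{k+1}\cdots U_n}{U_1U_2\cdots U_{n-k}}$. The last quotient I would bound by pairing $U_{k+i}$ with $U_i$: since $\frac{U_{k+i}}{U_i}=\frac{q^{k+i}-1}{q^i-1}\ge q^{k}$ for $1\le i\le n-k$ and $U_k=[k]_q\ge q^{k-1}$, the quotient is at least $q^{\,k(n-k)+k-1}$. Writing $\frac{r^{\,n-k+1}}{g^{\,n-k}}=g\left(\frac{r}{g}\right)^{n-k+1}$, this yields $L_{m,n}\ge g\left(\frac{r}{g}\right)^{n-k+1}q^{\,k(n-k)+k-1}$.

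It then remains to check that, for $k=\lfloor n/2\rfloor$, this dominates the target. Since $g\mid r$ we have $r/g\ge 1$ and $n-k+1\ge \tfrac{n}{2}+1$, so $\left(\frac{r}{g}\right)^{n-k+1}\ge\left(\frac{r}{g}\right)^{n/2+1}$; and a direct computation separating the parity of $n$ shows $k(n-k)+k-1\ge \frac{n(n-2)}{4}$ for all $n\ge 2$, which completes the proof. The main obstacle is to make the exponents of $r/g$ and of $q$ reach the $m$-independent target simultaneously: a crude application of Theorem~\ref{T1} on $[m,n]$ would only produce a $q$-exponent linear in $n$ when $m$ is small, so the monotonicity step pushing $k$ up to $\approx n/2$ is what creates the quadratic exponent $\tfrac{n^2}{4}$, while the sharp ratio bound $U_{k+i}/U_i\ge q^{k}$ is precisely what is needed to hit the target with no loss and without a separate treatment of small $n$.
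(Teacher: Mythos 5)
Your proposal is correct and follows essentially the same route as the paper: the identification $P=q+1$, $Q=q$, $R_0=u_0$, $R_1=u_0+r$, the application of Theorem~\ref{T1} combined with the monotonicity inequality \eqref{inc} to push the left endpoint up to $k=\lfloor n/2\rfloor$, and the bound $u_j\ge r[j]_q$. The only (harmless) deviation is that you bound the quotient $\frac{U_kU_{k+1}\cdots U_n}{U_1\cdots U_{n-k}}$ by the direct pairing $U_{k+i}/U_i\ge q^k$, which is in fact slightly sharper than the paper's appeal to Lemma~\ref{estU2}, i.e.\ to $q^{t-2}\le [t]_q\le q^t$.
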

\begin{rmq}
Theorem \ref{T3} gives an estimate of the same type as \eqref{q-lcm}, we prove it by a simpler way than the one given by Farhi \cite{Farhi4}. 
\end{rmq}

\begin{theorem}\label{T4}
There exists a computable constant $C>0$ such that:
\[\log \lcm\left(R_{\left\lfloor n/2\right\rfloor},R_{\left\lfloor n/2\right\rfloor+1},\dots,R_n\right)\geq \left(1 +o\left(\frac{1}{n}\right) \right)\frac{\log |\alpha|}{4} n^2~~~~(\forall n\geq C).\]
\end{theorem}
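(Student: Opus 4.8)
The plan is to derive Theorem~\ref{T4} as a direct consequence of the lower bound already established in Theorem~\ref{T2}, specialized to the simplest Lucas parameters so that the recurrence sequence $\boldsymbol{R}$ itself appears. The cleanest route is to note that Theorem~\ref{T2} applies to sequences of the form $cU_{j+1}+dU_j$; taking $(c,d)$ matching the initial data $(R_0,R_1)$ should recover $R_j$ up to the identity $R_n = aU_n + \text{(shift)}$, but more robustly one simply applies Theorem~\ref{T2} with the range $m=\lfloor n/2\rfloor$ and the same upper endpoint $n$. The key identity to invoke is that under the standing hypotheses ($\gcd(P,Q)=\gcd(R_1,Q)=1$, etc.) one can write $R_{j} = (R_1-QR_0 U_{0}/\cdots)$; concretely $R_{j+1}=R_1 U_{j+1}-QR_0 U_j$, so setting $c=R_1$ and $d=-QR_0$ places the window $R_{m+1},\dots,R_{n+1}$ into the exact form covered by Theorem~\ref{T2}.

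First I would fix notation so that $\lcm(R_{\lfloor n/2\rfloor},\dots,R_n)$ is matched against the left-hand side of Theorem~\ref{T2}. After the index shift $j\mapsto j-1$ turning $cU_{j+1}+dU_j$ into the run starting at the desired lower endpoint, the hypothesis $m\le\lfloor n/2\rfloor$ of Theorem~\ref{T2} is exactly what produces the lower endpoint $\lfloor n/2\rfloor$, so the range hypothesis is automatically satisfied. This yields a lower bound of the shape
\[
\lcm\left(R_{\lfloor n/2\rfloor},\dots,R_n\right)\ge (c\alpha+d)\left(\frac{c\alpha+d}{\alpha\gcd(c,d)}\right)^{n/2}\alpha^{n^2/4},
\]
valid for all $n$ large enough that the endpoints are meaningful and $c\alpha+d>0$. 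Taking logarithms, the dominant term is $\frac{\log|\alpha|}{4}n^2$, the linear-in-$n$ contribution $\frac{n}{2}\log\frac{c\alpha+d}{\alpha\gcd(c,d)}$ is absorbed as $o(n^2)$, and the constant $\log(c\alpha+d)$ is $O(1)$; collecting these gives precisely $\left(1+o(1/n)\right)\frac{\log|\alpha|}{4}n^2$.

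The remaining care is to verify that the chosen $(c,d)=(R_1,-QR_0)$ (or whatever specialization makes $R_{j+1}=cU_{j+1}+dU_j$ hold) satisfies the positivity and sign constraints of Theorem~\ref{T2}, namely $P,Q\in\mathbb{N}^*$, $\Delta>0$, and $c,d\in\mathbb{N}^*$. Since Theorem~\ref{T2} requires $c,d$ positive while $-QR_0$ may be negative, the honest path is to split into the two standing regimes: when the parameters force $Q>0$ one appeals directly to Theorem~\ref{T2}, and when $Q<0$ one instead invokes Theorem~\ref{T6}, whose lower bound $\gcd(R_0,R_1)\bigl((R_1+R_0|\beta|)/\gcd(R_0,R_1)\bigr)^{(n-1)/2}\alpha^{n^2/4-n/2-7/4}$ has the same leading asymptotics $\frac{\log|\alpha|}{4}n^2$ after taking logs, with the $-\frac{n}{2}-\frac{7}{4}$ exponent correction again lost into the $o(n^2)$ term.

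The main obstacle I anticipate is bookkeeping rather than genuine difficulty: ensuring the index shift aligns the lower endpoint to exactly $\lfloor n/2\rfloor$ (not $\lfloor n/2\rfloor\pm 1$), confirming that $c\alpha+d>0$ so the logarithm is well defined, and absorbing every subleading term cleanly into $o(1/n)\cdot\frac{\log|\alpha|}{4}n^2 = o(n)$ so the stated error form is honest. The constant $C$ is then whatever threshold makes all endpoint and positivity conditions hold simultaneously, and it is computable because every auxiliary quantity ($\alpha$, $\gcd$, the sign of $Q$) is explicit.
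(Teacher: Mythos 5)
Your reduction to Theorems \ref{T2} and \ref{T6} does not cover the generality in which Theorem \ref{T4} is stated, and the cases it misses are precisely the ones that require a genuinely new ingredient. Theorem \ref{T4} assumes only the standing hypotheses ($PQ\neq 0$, $\Delta\neq 0$, $\gcd(P,Q)=\gcd(R_1,Q)=1$, $|R_0|+|R_1|>0$): the signs of $P$, $Q$, $R_0$, $R_1$ are arbitrary and $\Delta$ may be negative, so that $\alpha$ and $\beta$ can be complex conjugates with $|\alpha|=|\beta|$. Theorem \ref{T2} requires $P,Q\in\mathbb{N}^*$, $\Delta>0$ and $c,d\in\mathbb{N}^*$ (with your identification $c=R_1$, $d=-QR_0$, this forces $R_1\geq 1$ and $R_0<0$ once $Q\geq 1$), while Theorem \ref{T6} requires $P>0$, $Q<0$ and $R_0,R_1\in\mathbb{N}^*$. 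Even after your case split on the sign of $Q$, large parts of the parameter space remain untreated: $P<0$, $\Delta<0$, $R_0=0$ (so even the Lucas sequence $U(P,Q)$ itself escapes both theorems, since then $d=-QR_0=0\notin\mathbb{N}^*$), and mixed or negative signs of $R_0,R_1$.

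This is not mere bookkeeping, because in the general case there is no elementary lower bound on $|R_t|$: when the root quotient has modulus $1$ (e.g. $\Delta<0$), or when $a\alpha^t$ and $b\beta^t$ nearly cancel, $|R_t|$ can be far smaller than $|\alpha|^t$, and Theorems \ref{T2} and \ref{T6} are exactly the positivity regimes where such cancellation cannot occur. The paper's proof instead combines Theorem \ref{T1} directly with the Shorey--Stewart lower bound $|R_t|\geq|\alpha|^{t-c_0\log t}$ (Lemma \ref{sho}, a consequence of Baker's method), which is the key input your argument lacks; it is also what supplies the computable threshold $C=2(c_1+1)$. A secondary point: even where your reduction applies, the linear corrections you absorb (e.g. $\tfrac{n}{2}\log\frac{c\alpha+d}{\alpha\gcd(c,d)}$, or the $-\tfrac{n}{2}-\tfrac{7}{4}$ in the exponent of Theorem \ref{T6}) are of size $O(n)$ and possibly negative, so claiming an error of the stated form $o(1/n)\cdot\frac{\log|\alpha|}{4}n^2=o(n)$ rather than $O(n)$ requires more justification than "collecting these gives precisely" the result.
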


\begin{coll}\label{cor5}
Suppose that $R_n\neq 0$ $(\forall n\in\mathbb{N^*})$. We have
\[\log \lcm\left(R_1,R_2,\dots,R_n\right)\gg\log \left|R_1R_2\cdots R_n\right|.\]
\end{coll}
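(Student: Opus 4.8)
The plan is to deduce Corollary \ref{cor5} as a direct consequence of the asymptotic lower bound supplied by Theorem \ref{T4}, combined with an upper bound on the product $\log\left|R_1R_2\cdots R_n\right|$ coming from the closed form $R_n=a\alpha^n+b\beta^n$. The point of the corollary is that the logarithm of the lcm grows at least as fast as the logarithm of the full product, so I need to show both quantities are of order $n^2\log|\alpha|$ — the lower bound on the lcm from below, and a matching upper bound on the product from above.

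First I would estimate $\log\left|R_1R_2\cdots R_n\right|=\sum_{j=1}^{n}\log|R_j|$ from above. Since $|\alpha|\geq|\beta|$ and $\alpha/\beta$ is not a root of unity, the dominant term gives $|R_j|\ll|\alpha|^j$ (more precisely $\log|R_j|=j\log|\alpha|+O(1)$, using that $R_j\neq 0$ for all $j\geq 1$ by hypothesis, so no term is anomalously small). Summing over $j$ from $1$ to $n$ yields
\[
\log\left|R_1R_2\cdots R_n\right|=\frac{n^2}{2}\log|\alpha|+O(n).
\]
Thus the product is $\sim\tfrac{1}{2}n^2\log|\alpha|$.

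Next I would invoke Theorem \ref{T4}, which gives a computable $C>0$ with
\[
\log\lcm\left(R_{\lfloor n/2\rfloor},\dots,R_n\right)\geq\Bigl(1+o(1/n)\Bigr)\frac{\log|\alpha|}{4}n^2
\]
for all $n\geq C$. Since $\lcm(R_1,\dots,R_n)$ is a multiple of $\lcm(R_{\lfloor n/2\rfloor},\dots,R_n)$, the same lower bound holds for the full lcm, so $\log\lcm(R_1,\dots,R_n)\geq\bigl(\tfrac{1}{4}+o(1)\bigr)n^2\log|\alpha|$. Comparing with the upper bound on the product then gives
\[
\log\lcm(R_1,\dots,R_n)\geq\Bigl(\tfrac12+o(1)\Bigr)\log\left|R_1R_2\cdots R_n\right|,
\]
which is exactly the assertion $\log\lcm(R_1,\dots,R_n)\gg\log\left|R_1R_2\cdots R_n\right|$.

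**The main obstacle** I anticipate is making the estimate $\log|R_j|=j\log|\alpha|+O(1)$ fully rigorous and uniform in $j$. The hypothesis $R_n\neq 0$ for all $n$ rules out vanishing terms, but one must still control the lower-order term $b\beta^j$ relative to $a\alpha^j$; when $|\alpha|=|\beta|$ (the case of complex conjugate roots of equal modulus) the two contributions can nearly cancel, so the $O(1)$ claim needs the Diophantine input that $\alpha/\beta$ is not a root of unity, together with a lower bound on $|R_j|$ — this is where I expect to lean on a linear-forms-in-logarithms or Baker-type estimate, or alternatively on the already-established growth properties of $\boldsymbol{R}$ used in Theorem \ref{T4}. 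Once a clean bound $\log|R_j|\leq j\log|\alpha|+O(1)$ is in hand, the summation and comparison are routine.
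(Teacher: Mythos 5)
Your proposal is correct and follows essentially the same route as the paper: the trivial upper bound $|R_t|\le(|a|+|b|)|\alpha|^t$ gives $\log\left|R_1R_2\cdots R_n\right|\le\frac{n(n+1)}{2}\log|\alpha|+O(n)$, and Theorem \ref{T4} together with the divisibility of $\lcm\left(R_1,\dots,R_n\right)$ by $\lcm\left(R_{\lfloor n/2\rfloor},\dots,R_n\right)$ gives the matching lower bound $\bigl(\tfrac14+o(1)\bigr)n^2\log|\alpha|$ for the lcm. The obstacle you anticipate is moot: since the conclusion is only $\gg$, you need just the \emph{upper} bound on the product, so no lower bound $\log|R_j|\ge j\log|\alpha|+O(1)$ is required --- which is fortunate, because that claim is not attainable in the complex-conjugate case, where the Shorey--Stewart estimate only gives an error of order $\log j$.
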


\begin{theorem}\label{T7}
Let $m$ be a fixed positive integer. Then:
\begin{equation}\label{T71}
\lim_{n\rightarrow +\infty}\frac{\log \lcm\left(R_n,R_{n+1},\dots,R_{n+m}\right)}{\log \left|R_n R_{n+1}\cdots R_{n+m}\right|}=1.
\end{equation}
In particular, we have
\begin{equation}\label{T72}
\log \lcm\left(R_n,R_{n+1},\dots,R_{n+m}\right)\sim n(m+1)\log |\alpha|~~~~\text{as}~n\rightarrow +\infty.
\end{equation}
\end{theorem}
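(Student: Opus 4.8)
My plan is to pin down $L_{n,n+m}:=\lcm(R_n,R_{n+1},\dots,R_{n+m})$ between two quantities differing only by a constant factor and then pass to logarithms. For the lower bound I would invoke Theorem~\ref{T1} with the pair of indices $(n,n+m)$; since $(n+m)-n=m$, it asserts that $L_{n,n+m}$ is a multiple of $\dfrac{R_nR_{n+1}\cdots R_{n+m}}{[m]_{\boldsymbol{U}}!\,(\gcd(R_0,R_1))^m}$. For $n$ large every term $R_{n+i}$ is nonzero (a nondegenerate binary recurrence vanishes only finitely often), so this rational number is nonzero; as $L_{n,n+m}$ is a nonzero integer, their quotient is a nonzero integer, whence
\[L_{n,n+m}\ \geq\ \frac{\left|R_nR_{n+1}\cdots R_{n+m}\right|}{[m]_{\boldsymbol{U}}!\,(\gcd(R_0,R_1))^m}.\]

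For the matching upper bound I would only use that the least common multiple of finitely many integers divides their product, giving $L_{n,n+m}\leq\left|R_nR_{n+1}\cdots R_{n+m}\right|$. Setting $c_m:=\log\!\big([m]_{\boldsymbol{U}}!\,(\gcd(R_0,R_1))^m\big)$, which is a constant because $m$ is fixed, the two inequalities become
\[\log\left|R_n\cdots R_{n+m}\right|-c_m\ \leq\ \log L_{n,n+m}\ \leq\ \log\left|R_n\cdots R_{n+m}\right|,\]
that is, $\log L_{n,n+m}=\log\left|R_n\cdots R_{n+m}\right|+O(1)$. Both claims of the theorem are thereby reduced to the behaviour of $\log\left|R_n\cdots R_{n+m}\right|=\sum_{i=0}^m\log|R_{n+i}|$.

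The crucial input is the growth estimate $\log|R_j|\sim j\log|\alpha|$ as $j\to+\infty$. Granting it, each of the $m+1$ summands obeys $\log|R_{n+i}|\sim(n+i)\log|\alpha|\sim n\log|\alpha|$, hence $\log\left|R_n\cdots R_{n+m}\right|\sim(m+1)\,n\log|\alpha|$, which in particular tends to $+\infty$. Dividing the displayed sandwich by $\log\left|R_n\cdots R_{n+m}\right|$ and letting $n\to+\infty$ gives \eqref{T71}, and feeding the asymptotic of the product into $\log L_{n,n+m}=\log\left|R_n\cdots R_{n+m}\right|+O(1)$ gives \eqref{T72}.

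The only genuine difficulty is the growth estimate. Writing $R_j=a\alpha^j+b\beta^j$ with $a\neq0$, the upper bound $|R_j|\leq(|a|+|b|)|\alpha|^j$ is trivial. If the roots are real then $|\alpha|>|\beta|$, so $R_j=a\alpha^j(1+o(1))$ and the estimate is immediate. The hard case is $\Delta<0$: here $\alpha,\beta$ are complex conjugates of common modulus $|\alpha|$ and $R_j=2|a|\,|\alpha|^j\cos(j\theta+\phi)$ with $\theta=\arg\alpha$, so I must prevent $|\cos(j\theta+\phi)|$ from being exponentially small, i.e. establish $\tfrac1j\log|\cos(j\theta+\phi)|\to0$. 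This amounts to a lower bound for the distance from $j\theta+\phi$ to the odd multiples of $\pi/2$; since $\alpha$ and $a/\bar a$ are algebraic and $\alpha/\beta$ is not a root of unity, Baker's theorem on linear forms in logarithms supplies a bound of the shape $|\cos(j\theta+\phi)|\gg j^{-C}$, so that $\log|R_j|=j\log|\alpha|+O(\log j)$. I expect this Diophantine lower bound to be the main obstacle; the remaining steps are routine bookkeeping with the constant $c_m$.
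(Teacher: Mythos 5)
Your argument is correct and follows essentially the same route as the paper: sandwich $L_{n,n+m}$ between $\left|R_nR_{n+1}\cdots R_{n+m}\right|$ and the divisor supplied by Theorem~\ref{T1}, then reduce everything to a lower bound of the shape $\left|R_t\right|\geq |\alpha|^{t-c_0\log t}$. The Diophantine estimate you flag as the main obstacle is precisely the Shorey--Stewart lemma (Lemma~\ref{sho}, itself a consequence of Baker's method), which the paper simply cites rather than reproving.
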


\begin{coll}\label{fiii}
Let $m$ be a fixed positive integer. We have
\[\log \lcm\left(F_n,F_{n+1},\dots,F_{n+m}\right)\sim n(m+1)\log\Phi~~~~\text{as}~n\rightarrow +\infty.\]
\end{coll}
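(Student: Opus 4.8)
The plan is to obtain Corollary \ref{fiii} as an immediate specialization of Theorem \ref{T7}, since the usual Fibonacci sequence is exactly the sequence $\boldsymbol{R}$ corresponding to the parameters $(R_0,R_1)=(0,1)$ and $(P,Q)=(1,-1)$. First I would verify that these parameters satisfy every standing hypothesis imposed at the start of Section \ref{sec2}: we have $PQ=-1\neq 0$, the discriminant $\Delta=P^2-4Q=1+4=5\neq 0$, and $|R_0|+|R_1|=1>0$, while $\gcd(P,Q)=\gcd(1,-1)=1$ and $\gcd(R_1,Q)=\gcd(1,-1)=1$. Thus the Fibonacci sequence lies squarely within the class of sequences to which Theorem \ref{T7} applies.

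Next I would identify the dominant root. The companion polynomial is $P_{\boldsymbol{R}}(x)=x^2-x-1$, whose zeros are $\Phi=(1+\sqrt{5})/2$ and $1-\Phi=(1-\sqrt{5})/2=-1/\Phi$. Since $|\Phi|>\left|{-1/\Phi}\right|$, we have $\alpha=\Phi$ and $\beta=-1/\Phi$. This also confirms the remaining hypothesis that $\alpha/\beta$ is not a root of unity, because $\alpha/\beta=-\Phi^2$ has modulus $\Phi^2>1$ and hence cannot lie on the unit circle.

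With these identifications in hand, the conclusion follows by substituting $\alpha=\Phi$ directly into the asymptotic formula \eqref{T72} of Theorem \ref{T7}, which yields
\[\log \lcm\left(F_n,F_{n+1},\dots,F_{n+m}\right)\sim n(m+1)\log\Phi~~~~\text{as}~n\rightarrow +\infty.\]
I do not expect any genuine obstacle here, as the corollary is a direct instantiation of the general theorem; the only point requiring care is the routine check that the Fibonacci parameters meet the coprimality and nondegeneracy conditions of Theorem \ref{T7}, rather than falling into an excluded degenerate case.
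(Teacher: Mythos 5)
Your proposal is correct and matches the paper's own proof, which likewise obtains the corollary by applying Theorem \ref{T7} with $(P,Q)=(1,-1)$ and $(R_0,R_1)=(0,1)$. Your explicit verification of the standing hypotheses (nonvanishing of $PQ$ and $\Delta$, the coprimality conditions, and that $\alpha/\beta=-\Phi^2$ is not a root of unity) is a welcome elaboration of the same one-line specialization.
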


\begin{theorem}\label{co}
For all positive integers $n$ and $k$ such that $n\geq k$, we have
\begin{equation}\label{id}
\lcm\left(U_n,U_{n-1},\dots,U_{n-k+1}\right)=\lcm\left\lbrace U_m{\binom{n}{m}}_{\boldsymbol{U}};~1\leq m\leq k\right\rbrace.
\end{equation}
\end{theorem}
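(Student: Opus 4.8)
The plan is to prove the identity by establishing separately the two divisibilities between the positive integers $A:=\lcm\left(U_n,U_{n-1},\dots,U_{n-k+1}\right)$ and $B:=\lcm\left\lbrace U_m{\binom{n}{m}}_{\boldsymbol{U}}:1\le m\le k\right\rbrace$. The engine for the first one is the elementary identity
\[U_m{\binom{n}{m}}_{\boldsymbol{U}}=U_{n-m+1}{\binom{n}{m-1}}_{\boldsymbol{U}}\qquad(1\le m\le k),\]
which follows at once by writing both sides as $\big(U_nU_{n-1}\cdots U_{n-m+1}\big)/[m-1]_{\boldsymbol{U}}!$. Granting this and the integrality of ${\binom{n}{m-1}}_{\boldsymbol{U}}$ (valid since $\gcd(P,Q)=1$), every term $U_{n-m+1}$ of the block on the left divides $U_m{\binom{n}{m}}_{\boldsymbol{U}}$, hence divides $B$; as $m$ runs over $1,\dots,k$ the index $n-m+1$ sweeps the whole block, so taking the lcm gives $A\mid B$. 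This disposes of the easy inclusion.

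For the reverse inclusion $B\mid A$ it suffices to show that each generator $U_m{\binom{n}{m}}_{\boldsymbol{U}}$ divides $A$. Since $m\le k$, the partial lcm $\lcm\left(U_{n-m+1},\dots,U_n\right)$ already divides $A$, so I only need the self-contained divisibility
\[U_m{\binom{n}{m}}_{\boldsymbol{U}}=\frac{U_nU_{n-1}\cdots U_{n-m+1}}{[m-1]_{\boldsymbol{U}}!}\ \Big|\ \lcm\left(U_{n-m+1},\dots,U_n\right),\]
equivalently that $\big(U_nU_{n-1}\cdots U_{n-m+1}\big)/\lcm\left(U_{n-m+1},\dots,U_n\right)$ divides $[m-1]_{\boldsymbol{U}}!$. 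This is a statement about $m$ consecutive terms of $\boldsymbol{U}$ and can be checked prime by prime.

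Fix a prime $p$. If $p\mid Q$ then $\gcd(P,Q)=1$ forces $p\nmid U_j$ for every $j\ge1$, so $p$ contributes nothing. Otherwise $p$ has a finite rank of apparition $\rho=\rho(p)$, and I will use the classical $p$-adic valuation formulas for Lucas sequences: $v_p(U_\ell)=0$ unless $\rho\mid\ell$, while $v_p(U_{s\rho})=v_p(U_\rho)+v_p(s)$ for odd $p$. The inequality to be proved reads $\sum_{i=0}^{m-1}v_p(U_{n-i})-\max_{0\le i\le m-1}v_p(U_{n-i})\le\sum_{j=1}^{m-1}v_p(U_j)$. Only indices divisible by $\rho$ matter: in the window $\{n-m+1,\dots,n\}$ these are $s_1\rho<\cdots<s_t\rho$ with $s_1,\dots,s_t$ consecutive integers and $t\le\lceil m/\rho\rceil$, while on the right they are $\rho,2\rho,\dots,r\rho$ with $r=\lfloor(m-1)/\rho\rfloor\ge\lceil m/\rho\rceil-1\ge t-1$. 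Substituting $v_p(U_{s\rho})=v_p(U_\rho)+v_p(s)$ reduces the claim to the consecutive-integers inequality $\sum_i v_p(s_i)-\max_i v_p(s_i)\le v_p\big((t-1)!\big)$, which I will prove by writing the left side as $\sum_{a\ge1}\big(\#\{i:p^a\mid s_i\}-1\big)$ summed over the levels $a$ below the maximal valuation, and bounding $\#\{i:p^a\mid s_i\}-1\le\lceil t/p^a\rceil-1=\lfloor(t-1)/p^a\rfloor$.

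The main obstacle is the prime $p=2$, where the lifting-the-exponent formula for $v_2(U_{s\rho})$ is genuinely different (it depends on the parity of $s$ and on $v_2(U_{2\rho})$), so the clean substitution above is unavailable. I expect to handle it by invoking the corresponding known $2$-adic valuation formula for Lucas sequences and repeating the count over the consecutive multipliers $s_1,\dots,s_t$; the two combinatorial ingredients, namely $t-1\le r$ and the consecutive-integers bound, are robust, so only the bookkeeping of the $2$-adic exponents changes. A secondary point to verify is the degenerate subcase $p\mid\Delta$ (where $\rho=p$), but there the formula $v_p(U_{s\rho})=v_p(U_\rho)+v_p(s)$ still holds, so it is absorbed into the odd-prime argument.
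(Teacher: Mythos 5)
Your easy inclusion ($A\mid B$, obtained from the identity $U_m{\binom{n}{m}}_{\boldsymbol{U}}=U_{n-m+1}{\binom{n}{m-1}}_{\boldsymbol{U}}$ together with the integrality of the $\boldsymbol{U}$-binomial coefficients) is exactly the paper's argument and is complete. The problem is the reverse inclusion. You reduce it to showing that $U_m{\binom{n}{m}}_{\boldsymbol{U}}$ divides $\lcm\left(U_{n-m+1},\dots,U_n\right)$ and attack this prime by prime. Your odd-prime computation is sound: the identity $v_p(U_{s\rho})=v_p(U_\rho)+v_p(s)$, the count $t-1\le\lfloor(m-1)/\rho\rfloor=\lceil m/\rho\rceil-1$, and the consecutive-integers bound $\#\{i:p^a\mid s_i\}-1\le\lfloor(t-1)/p^a\rfloor$ do combine to give the required valuation inequality, and the subcase $p\mid\Delta$ is indeed absorbed. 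But the case $p=2$ is not proved; it is only announced. There the substitution that drives your whole computation is unavailable: $v_2(U_{s\rho})$ bifurcates on the parity of $s$ and involves a second constant $v_2(U_{2\rho})$ that is not $v_2(U_\rho)+1$ in general (for Fibonacci, $v_2(F_3)=1$ but $v_2(F_6)=3$), so the reduction to a statement about $v_2$ of consecutive multipliers has to be redone with a different case analysis, and the two "robust ingredients" you cite do not by themselves close it. As written, the hard inclusion is therefore incomplete at precisely the point you flag as the main obstacle.

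The gap is also avoidable, because the divisibility you are trying to reprove is a special case of Theorem \ref{T1}. Taking $R_0=0$ and $R_1=1$ (so $R_t=U_t$ and $\gcd(R_0,R_1)=1$) and applying Theorem \ref{T1} to the window $\{n-m+1,\dots,n\}$ shows that $\lcm\left(U_{n-m+1},\dots,U_n\right)$ is a multiple of $\frac{U_{n-m+1}\cdots U_n}{[m-1]_{\boldsymbol{U}}!}=U_m{\binom{n}{m}}_{\boldsymbol{U}}$, and since $\lcm\left(U_{n-k+1},\dots,U_n\right)$ is a multiple of each of these partial lcm's for $1\le m\le k$, the inclusion $B\mid A$ follows in one line. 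That is the paper's proof, and one of its selling points is exactly that it bypasses all $p$-adic valuation arguments (the partial-fraction identity behind Theorem \ref{T1} does the work). So either complete the $2$-adic bookkeeping in full, or simply invoke Theorem \ref{T1}.
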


\begin{coll}\label{co2}
For all positive integers $n$ and $k$ such that $n\geq k$, we have
\begin{equation}\label{id2}
\lcm\left\lbrace {\binom{n}{m}}_{\boldsymbol{U}};~1\leq m\leq k\right\rbrace=\frac{\lcm\left(U_{n+1},U_n,\dots,U_{n-k+1}\right)}{\left|U_{n+1}\right|}.
\end{equation}
\end{coll}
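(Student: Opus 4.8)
The plan is to deduce Corollary \ref{co2} directly from Theorem \ref{co} by shifting the indices and then performing an elementary manipulation of the $\boldsymbol{U}$-binomial coefficients. Concretely, I would apply Theorem \ref{co} with $n$ replaced by $n+1$ and $k$ replaced by $k+1$; the required hypothesis $n+1\geq k+1$ is equivalent to the given $n\geq k$. Since $(n+1)-(k+1)+1=n-k+1$, the left-hand side of \eqref{id} becomes exactly the numerator appearing in \eqref{id2}, and the theorem reads
\[
\lcm\left(U_{n+1},U_n,\dots,U_{n-k+1}\right)=\lcm\left\lbrace U_m{\binom{n+1}{m}}_{\boldsymbol{U}};~1\leq m\leq k+1\right\rbrace.
\]

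The key step is the cancellation identity
\[
U_m{\binom{n+1}{m}}_{\boldsymbol{U}}=U_{n+1}{\binom{n}{m-1}}_{\boldsymbol{U}}\qquad(1\leq m\leq k+1),
\]
which follows immediately from the definition of the $\boldsymbol{U}$-binomial coefficient: writing ${\binom{n+1}{m}}_{\boldsymbol{U}}=\frac{U_{n+1}U_n\cdots U_{n-m+2}}{U_1U_2\cdots U_m}$, the factor $U_m$ cancels the last factor of the denominator, while $U_{n+1}$ factors out of the numerator, and the quotient that remains is precisely $\frac{U_nU_{n-1}\cdots U_{n-m+2}}{U_1U_2\cdots U_{m-1}}={\binom{n}{m-1}}_{\boldsymbol{U}}$. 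The case $m=1$ is consistent with the empty-product convention ${\binom{n}{0}}_{\boldsymbol{U}}=1$, giving the term $U_{n+1}$ directly. Substituting $j=m-1$, the indexing set on the right becomes $\left\lbrace U_{n+1}{\binom{n}{j}}_{\boldsymbol{U}};~0\leq j\leq k\right\rbrace$.

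Finally I would extract the common factor $U_{n+1}$ from the least common multiple. Because $\gcd(P,Q)=1$, all the ${\binom{n}{j}}_{\boldsymbol{U}}$ are integers (as recalled in the introduction), so $\lcm\left\lbrace U_{n+1}{\binom{n}{j}}_{\boldsymbol{U}};~0\leq j\leq k\right\rbrace=|U_{n+1}|\cdot\lcm\left\lbrace {\binom{n}{j}}_{\boldsymbol{U}};~0\leq j\leq k\right\rbrace$; moreover the term $j=0$ contributes ${\binom{n}{0}}_{\boldsymbol{U}}=1$ and may be dropped, leaving $1\leq j\leq k$. Combining the three displays gives
\[
\lcm\left(U_{n+1},U_n,\dots,U_{n-k+1}\right)=|U_{n+1}|\cdot\lcm\left\lbrace {\binom{n}{m}}_{\boldsymbol{U}};~1\leq m\leq k\right\rbrace,
\]
and dividing by $|U_{n+1}|$ yields \eqref{id2}; here $U_{n+1}\neq 0$ because $\alpha/\beta$ is not a root of unity. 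There is no serious obstacle in this argument: the only points requiring care are the bookkeeping of the index shift, the verification of the cancellation identity, and the justification that $|U_{n+1}|$ may be pulled out of the lcm, the last of which rests on the integrality of the $\boldsymbol{U}$-binomial coefficients.
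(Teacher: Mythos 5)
Your argument is correct and is essentially the paper's own proof: the paper likewise deduces Corollary \ref{co2} from Theorem \ref{co} (applied with $n+1$ and $k+1$) via the identity $U_m{\binom{n+1}{m}}_{\boldsymbol{U}}=U_{n+1}{\binom{n}{m-1}}_{\boldsymbol{U}}$, and you have merely written out the index shift and the extraction of the common factor $\left|U_{n+1}\right|$ that the paper leaves implicit.
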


\begin{rmq}
When $k=n$ in \eqref{id} and \eqref{id2}, we obtain the formulas of Bousla and Farhi \cite{Bousla,Bousfarcras} for the case of Lucas sequences. So, Theorem \ref{co} and Corollary \ref{co2} provide stronger identities for this case. It must be noted that the current identities are obtained by a simpler way, which do not involve the strong divisibility property. 
\end{rmq}

The following theorem generalize the results of Farhi \cite{lcmbinom}:

\begin{theorem}\label{tri}
Let $n$ and $k$ be two positive integers such that $n\geq k$. Then the number
\[{\bi{n}{k}}_{\boldsymbol{U}}:=\frac{\lcm\left(U_n,U_{n-1},\dots,U_{n-k+1}\right)}{\lcm\left(U_1,U_{2},\dots,U_{k}\right)},\]
is a positive integer. Moreover, we have
\[{\bi{n}{k}}_{\boldsymbol{U}}~\text{divides}~{\binom{n}{k}}_{\boldsymbol{U}}.\]
\end{theorem}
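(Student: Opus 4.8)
The plan is to reduce both assertions to a prime-by-prime comparison of $p$-adic valuations. Writing $v_p$ for the $p$-adic valuation, I would use that
\[v_p\!\left({\binom{n}{k}}_{\boldsymbol{U}}\right)=\sum_{j=n-k+1}^{n}v_p(U_j)-\sum_{j=1}^{k}v_p(U_j),\]
while $v_p\big(\lcm(U_n,\dots,U_{n-k+1})\big)=\max_{n-k+1\le j\le n}v_p(U_j)$ and $v_p\big(\lcm(U_1,\dots,U_k)\big)=\max_{1\le j\le k}v_p(U_j)$. Thus all three quantities are governed by the single arithmetic function $j\mapsto v_p(U_j)$, and the whole theorem amounts to two inequalities between these maxima and sums, to be checked for every prime $p$. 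Positivity of ${\bi{n}{k}}_{\boldsymbol{U}}$ is then immediate, since both least common multiples are positive integers, and recall $\binom{n}{k}_{\boldsymbol{U}}\in\Z$ by \cite{Bousla}; the content is integrality and the divisibility.

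Next I would describe $j\mapsto v_p(U_j)$ through the rank of apparition. A prime $p\mid Q$ never divides any $U_j$ (since $\gcd(P,Q)=1$ forces $U_j\equiv P^{\,j-1}\pmod p$), so there all three valuations vanish and nothing is needed. For the remaining $p$, let $\rho=\rho(p)$ be the least index with $p\mid U_\rho$, so that $p\mid U_j\iff\rho\mid j$, and put $\tau:=v_p(U_\rho)\ge 1$. The engine is the lifting-the-exponent formula for Lucas sequences: for $p$ odd, $v_p(U_{\rho t})=\tau+v_p(t)$ for all $t\ge 1$. Reindexing the multiples of $\rho$ by $j=\rho t$, the range $\{1,\dots,k\}$ yields $t\in\{1,\dots,M\}$ with $M:=\lfloor k/\rho\rfloor$, whereas the window $\{n-k+1,\dots,n\}$ yields a block $I$ of $A\in\{M,M+1\}$ consecutive integers with $\max I=N':=\lfloor n/\rho\rfloor$. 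Hence, when $M\ge 1$,
\[v_p\big(\lcm(U_1,\dots,U_k)\big)=\tau+\lfloor\log_p M\rfloor,\qquad v_p\big(\lcm(U_n,\dots,U_{n-k+1})\big)=\tau+v_p(\lcm I),\]
and $v_p\big({\binom{n}{k}}_{\boldsymbol{U}}\big)=(A-M)\tau+v_p\big(\textstyle\prod_{t\in I}t\big)-v_p(M!)$, the case $M=0$ (i.e. $\rho>k$) being trivial.

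With this reduction, integrality $\lcm(U_1,\dots,U_k)\mid\lcm(U_n,\dots,U_{n-k+1})$ amounts to $v_p(\lcm I)\ge\lfloor\log_p M\rfloor$, which holds because $I$ is a block of at least $M$ consecutive integers and hence meets a multiple of $p^{\lfloor\log_p M\rfloor}$. For the divisibility by ${\binom{n}{k}}_{\boldsymbol{U}}$ I would invoke Farhi's integer theorem \cite{lcmbinom} in the form $v_p(\lcm I)\le v_p\!\left(\binom{N'}{A}\right)+\lfloor\log_p A\rfloor$; combined with $\prod_{t\in I}t=\binom{N'}{A}\,A!$, the inequality to be proved collapses to the elementary comparison
\[\lfloor\log_p A\rfloor\le\lfloor\log_p M\rfloor+(A-M)\tau+v_p(A!)-v_p(M!).\]
For $A=M$ this is an equality, and for $A=M+1$ the left side can exceed $\lfloor\log_p M\rfloor$ by at most $1$, which occurs only when $M+1$ is a power of $p$ and is then absorbed by the surplus $(A-M)\tau+v_p(M+1)\ge\tau+1$ on the right. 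This settles both assertions for every odd $p$.

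The main obstacle is the valuation at $p=2$ (and, more generally, the exceptional primes dividing the discriminant $\Delta$), where the clean formula $v_p(U_{\rho t})=\tau+v_p(t)$ fails; already for Fibonacci one has $v_2(F_{3t})=1$ for odd $t$ but $v_2(F_{3t})=v_2(t)+2$ for even $t$. To handle these I would substitute the correct explicit description of $v_p(U_j)$ on multiples of $\rho$ (available in the literature on the $p$-adic valuation of Lucas sequences) and re-run the same two counting inequalities. The modification is uniform enough---the valuation on multiples of $\rho$ still grows like $v_p(t)$ up to a bounded, explicitly known correction---that the integrality bound persists and the extra contribution to $v_p\big({\binom{n}{k}}_{\boldsymbol{U}}\big)$ continues to dominate the matching increase in $v_p(\lcm I)$ by the same power-of-$p$ bookkeeping. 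Assembling the per-prime inequalities over all $p$ then yields simultaneously that ${\bi{n}{k}}_{\boldsymbol{U}}$ is a positive integer and that it divides ${\binom{n}{k}}_{\boldsymbol{U}}$.
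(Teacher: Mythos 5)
Your reduction to $p$-adic valuations is coherent and the bookkeeping you carry out for the ``clean'' primes is correct: for an odd prime $p\nmid Q$ with rank of apparition $\rho$ and $v_p(U_{\rho t})=\tau+v_p(t)$, your two inequalities (a block of $\ge M$ consecutive integers meets a multiple of $p^{\lfloor\log_p M\rfloor}$, and $\lfloor\log_p A\rfloor\le\lfloor\log_p M\rfloor+(A-M)\tau+v_p(A!)-v_p(M!)$ for $A\in\{M,M+1\}$) do give both integrality and the divisibility for those $p$. But the proof has a genuine gap exactly where you flag ``the main obstacle'': at $p=2$ the lifting-the-exponent law fails (your own Fibonacci example: $v_2(F_{3t})=1$ for odd $t$ but $v_2(t)+2$ for even $t$), and you never actually verify the two inequalities there. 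This is not a routine patch: once the valuation on multiples of $\rho$ acquires a $t$-dependent correction, the maximum over $\{1,\dots,M\}$ is no longer $\tau+\lfloor\log_p M\rfloor$, the window $I$ and the initial segment pick up \emph{different} numbers of corrected terms, and the surplus you rely on to absorb the jump of $\lfloor\log_2(\cdot)\rfloor$ has to be recomputed from scratch. Asserting that ``the modification is uniform enough'' is a claim, not a proof; as written the argument establishes the theorem only for odd primes, so neither the integrality of ${\bi{n}{k}}_{\boldsymbol{U}}$ nor the divisibility by ${\binom{n}{k}}_{\boldsymbol{U}}$ is fully proved. A secondary (repairable) weakness is that your divisibility step imports Farhi's integer-case theorem as a black box, so the argument is not self-contained.

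It is worth knowing that the paper's proof deliberately avoids this entire circle of ideas --- indeed, the point stressed in the concluding remarks is that Farhi asked for a proof \emph{not} based on prime valuations. The paper gets integrality purely from the divisibility-sequence property (each $i\le k$ divides some element of the window $\{n-k+1,\dots,n\}$, hence $U_i\mid U_m\mid\lcm(U_n,\dots,U_{n-k+1})$), and gets the divisibility statement by reducing it to the fact, supplied by Theorem \ref{T1} applied to $\boldsymbol{U}$ itself, that $\lcm(U_{i+1},\dots,U_k)$ is a multiple of $U_{k-i}{\binom{k}{i}}_{\boldsymbol{U}}$. If you want to salvage your route, you must write out the exceptional-prime case in full, using the known exact formula for $v_2(U_j)$; otherwise the paper's valuation-free argument is both shorter and complete.
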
 

\begin{coll}\label{derrr}
For any positive integer $n$, we have
\[\lcm\left(U_1,U_2,\dots,U_{n}\right)=\lcm\left(U_n,U_{n-1},\dots,U_{n-\left\lceil\frac{n}{2}\right\rceil+1}\right).\]
\end{coll}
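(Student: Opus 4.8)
The plan is to deduce Corollary \ref{derrr} directly from Theorem \ref{tri}. First I would observe that the right-hand side is the least common multiple of a sub-family of $\{U_1,U_2,\dots,U_n\}$: using the elementary identity $\lceil n/2\rceil+\lfloor n/2\rfloor=n$, the lowest index appearing is $n-\lceil n/2\rceil+1=\lfloor n/2\rfloor+1\geq 1$, so that
\[\lcm\left(U_n,U_{n-1},\dots,U_{n-\left\lceil\frac{n}{2}\right\rceil+1}\right)=\lcm\left(U_{\left\lfloor n/2\right\rfloor+1},\dots,U_n\right).\]
Since the least common multiple of a sub-family always divides that of the whole family, the right-hand side divides the left-hand side for free; it remains to establish the reverse divisibility.

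For the reverse direction, it suffices to show that the lower block $\lcm\left(U_1,\dots,U_{\lfloor n/2\rfloor}\right)$ is absorbed, that is, it divides the right-hand side, because then
\[\lcm\left(U_1,\dots,U_n\right)=\lcm\left(\lcm\left(U_1,\dots,U_{\left\lfloor n/2\right\rfloor}\right),\,\lcm\left(U_{\left\lfloor n/2\right\rfloor+1},\dots,U_n\right)\right)=\lcm\left(U_{\left\lfloor n/2\right\rfloor+1},\dots,U_n\right),\]
using that $A\mid B$ forces $\lcm(A,B)=B$. To obtain this absorption I would invoke Theorem \ref{tri} with $k=\lceil n/2\rceil$: the theorem asserts that ${\bi{n}{k}}_{\boldsymbol{U}}=\lcm\left(U_n,\dots,U_{n-k+1}\right)/\lcm\left(U_1,\dots,U_k\right)$ is a positive integer, which is precisely the statement that $\lcm\left(U_1,\dots,U_{\lceil n/2\rceil}\right)$ divides $\lcm\left(U_n,\dots,U_{\lfloor n/2\rfloor+1}\right)$, i.e. the right-hand side. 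Since $\lfloor n/2\rfloor\leq\lceil n/2\rceil$, monotonicity of the least common multiple over nested initial segments gives $\lcm\left(U_1,\dots,U_{\lfloor n/2\rfloor}\right)\mid\lcm\left(U_1,\dots,U_{\lceil n/2\rceil}\right)$, and combining the two divisibilities yields the desired absorption.

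There is essentially no hard step here; the whole content is already packaged in Theorem \ref{tri}, and the corollary is obtained by the single well-chosen specialization $k=\lceil n/2\rceil$ together with the general fact that a divisor summand disappears from an $\lcm$. The only point demanding genuine care is the index bookkeeping: verifying $n-\lceil n/2\rceil+1=\lfloor n/2\rfloor+1$ so that the window of the $\lceil n/2\rceil$ largest-indexed terms used by Theorem \ref{tri} coincides exactly with the family on the right-hand side, and checking the small cases $n=1,2$, where the lower block is empty or reduces to $U_1=1$ and the identity holds trivially. I expect the final write-up to occupy only a few lines.
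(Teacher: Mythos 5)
Your proposal is correct and follows essentially the same route as the paper: both apply Theorem \ref{tri} with $k=\left\lceil n/2\right\rceil$ to see that $\lcm\left(U_1,\dots,U_{\left\lceil n/2\right\rceil}\right)$ divides the right-hand side, and then use the fact that the two index blocks $\left\lbrace 1,\dots,\left\lceil n/2\right\rceil\right\rbrace$ and $\left\lbrace \left\lfloor n/2\right\rfloor+1,\dots,n\right\rbrace$ together cover $\left\lbrace 1,\dots,n\right\rbrace$. The only cosmetic difference is that you phrase the absorption via $A\mid B\Rightarrow\lcm(A,B)=B$ on the lower block, while the paper adjoins the terms $U_1,\dots,U_{\left\lceil n/2\right\rceil}$ into the $\lcm$ directly; the content is identical.
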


\section{The proofs}

In what follows, we let $R_0,R_1,P$ and $Q$ be integers, with $|R_0|+|R_1|>0$, $PQ,\Delta\neq 0$, and $\gcd\left(P,Q\right)=\gcd\left(R_1,Q\right)=1$. For given positive integers $n$ and $k$ such that $n\geq k$, we define the following functions:
\begin{align*}
f(j,k,n)&:=\sum_{\begin{subarray}{c}k\leq i\leq n\\ i\neq j\end{subarray}}\min(i,j)~~~~(\forall j\in\left\lbrace k,k+1,\dots,n\right\rbrace),\\ g(k,n)&:=k+(k+1)+\dots +n=\frac{(n+k)(n-k+1)}{2},\\ h(k,n)&:=\max_{k\leq j\leq n}\left\lbrace f(j,k,n)\right\rbrace.
\end{align*} 
Since for any $j\in\left\lbrace k,k+1,\dots,n\right\rbrace$ the number $\frac{{[n-k]}_{\boldsymbol{U}}!}{{[j-k]}_{\boldsymbol{U}}!{[n-j]}_{\boldsymbol{U}}!}={\binom{n-k}{n-j}}_{\boldsymbol{U}}$ is an integer, then the following relations hold:
\begin{equation}\label{e15}
\lcm\left\lbrace Q^{f(j,k,n)}{[j-k]}_{\boldsymbol{U}}!{[n-j]}_{\boldsymbol{U}}!;~j=k,k+1,\dots,n\right\rbrace~~\text{divides}~~Q^{h(k,n)}{[n-k]}_{\boldsymbol{U}}!,
\end{equation}
and
\begin{equation}\label{eeeee}
\lcm\left\lbrace Q^{f(j,k,n)-(n-k)}{[j-k]}_{\boldsymbol{U}}!{[n-j]}_{\boldsymbol{U}}!;~j=k,k+1,\dots,n\right\rbrace~~\text{divides}~~Q^{h(k,n)}{[n-k]}_{\boldsymbol{U}}!.
\end{equation}
Now, for all positive integers $k$ and $k'$ such that $k\leq k'$, we have obviously: $L_{k',n}$ divides $L_{k,n}$; so, we shall use the following inequality:
\begin{equation}\label{inc}
L_{k',n}\leq L_{k,n}~~~~(\forall k,k'\in\mathbb{N^*},~k\leq k').
\end{equation}

\subsection{Proof of Theorem \ref{T1}}

The proof of Theorem \ref{T1} needs the following lemmas: 

\begin{lemma}\label{L2}
For a given positive integer $m$ and given non-zero pairwise distinct complex numbers $z_1,z_2,\dots,z_m$, we have
\[\frac{1}{z_1 z_2\cdots z_m}=\sum_{j=1}^{m}\frac{1}{\prod_{\begin{subarray}{c} 1\leq i\leq m\\i\neq j\end{subarray}}(z_i-z_j)}\cdot\frac{1}{z_j}.\]
\end{lemma}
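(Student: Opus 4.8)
The plan is to recognize the identity as the partial fraction expansion of a single rational function, evaluated at the origin. First I would introduce
\[
F(x) := \frac{1}{\prod_{i=1}^{m}(x - z_i)}.
\]
By hypothesis the $z_i$ are pairwise distinct, so the denominator has $m$ simple roots, and since the constant numerator has degree $0 < m$, the function $F$ is a proper rational function. Consequently it admits a partial fraction decomposition with no polynomial part:
\[
F(x) = \sum_{j=1}^{m} \frac{A_j}{x - z_j}.
\]
The justification that no polynomial term appears (the degree comparison) is the one point worth stating explicitly, though it is routine.

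Next I would compute the coefficients $A_j$ by the usual residue trick: multiplying through by $(x - z_j)$ and letting $x \to z_j$ kills every other term and yields
\[
A_j = \frac{1}{\prod_{\substack{1\leq i\leq m\\ i\neq j}}(z_j - z_i)}.
\]
Because each $z_i$ is nonzero, the point $x = 0$ is not a pole of $F$, so I may substitute $x = 0$ into the decomposition. This gives
\[
\frac{1}{\prod_{i=1}^{m}(-z_i)} = \sum_{j=1}^{m} \frac{1}{\prod_{\substack{1\leq i\leq m\\ i\neq j}}(z_j - z_i)}\cdot\frac{1}{-z_j}.
\]

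The final step is sign bookkeeping, which is the only mildly delicate part. Using $\prod_{i=1}^{m}(-z_i) = (-1)^m \prod_{i=1}^{m} z_i$ on the left, and $\prod_{i\neq j}(z_j - z_i) = (-1)^{m-1}\prod_{i\neq j}(z_i - z_j)$ together with the factor $1/(-z_j)$ on the right, both sides are seen to carry a common factor $(-1)^m$; cancelling it leaves exactly the claimed identity. I expect no genuine obstacle here beyond tracking these signs carefully. As an independent sanity check, I would note the equivalent route through Lagrange interpolation: the basis polynomials $\ell_j(x) = \prod_{i\neq j}(x - z_i)/(z_j - z_i)$ interpolate the constant function $1$, so $\sum_{j} \ell_j(x) \equiv 1$; evaluating at $x = 0$ and dividing by $\prod_{j} z_j$ reproduces the same formula, with the signs absorbed automatically into the basis.
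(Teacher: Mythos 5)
Your proof is correct and follows essentially the same route as the paper, which also expands a rational function in partial fractions and evaluates at the origin. The only cosmetic difference is that the paper works with $1/\bigl((z+z_1)(z+z_2)\cdots(z+z_m)\bigr)$ rather than $1/\prod_i(x-z_i)$, which makes the sign bookkeeping you carry out at the end unnecessary.
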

\begin{proof}
The lemma follows by expressing $1/(z+z_1)(z+z_2)\cdots (z+z_m)$ in partial fractions and taking $z=0$.
\end{proof}

\begin{lemma}\label{eqdeR}
For any positive integer $m$, we have
\[R_m=R_1U_m-R_0QU_{m-1}.\]
\end{lemma}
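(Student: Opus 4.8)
The plan is to prove the identity directly from the Binet-type closed forms, since everything in sight is a linear combination of $\alpha^m$ and $\beta^m$. Recall that $\alpha$ and $\beta$ are the roots of $P_{\boldsymbol{R}}(x)=x^2-Px+Q$, so by Vieta's relations we have $\alpha+\beta=P$ and, crucially, $\alpha\beta=Q$. We also have at our disposal $R_n=a\alpha^n+b\beta^n$ with $a=(R_1-\beta R_0)/(\alpha-\beta)$ and $b=(R_1-\alpha R_0)/(\beta-\alpha)$, together with $U_n=(\alpha^n-\beta^n)/(\alpha-\beta)$.

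First I would substitute these expressions into the right-hand side and replace $Q$ by $\alpha\beta$, obtaining
\[
R_1U_m-R_0QU_{m-1}=\frac{1}{\alpha-\beta}\Bigl[R_1(\alpha^m-\beta^m)-R_0\alpha\beta(\alpha^{m-1}-\beta^{m-1})\Bigr].
\]
Expanding the bracket and collecting the coefficients of $\alpha^m$ and of $\beta^m$ recasts this as
\[
\frac{1}{\alpha-\beta}\Bigl[\alpha^m(R_1-\beta R_0)-\beta^m(R_1-\alpha R_0)\Bigr].
\]
The key step is then simply to recognize the two coefficients: by definition $(R_1-\beta R_0)/(\alpha-\beta)=a$ and $(R_1-\alpha R_0)/(\alpha-\beta)=-b$, so the whole expression collapses to $a\alpha^m+b\beta^m=R_m$, as desired.

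I do not expect any genuine obstacle here; the only care needed is the algebraic bookkeeping when matching the leftover coefficients to $a$ and $b$, in particular watching the sign that arises because $b=(R_1-\alpha R_0)/(\beta-\alpha)$ rather than $(R_1-\alpha R_0)/(\alpha-\beta)$.

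As an alternative route that avoids fractions entirely, I could argue by induction on $m$. Setting $V_m:=R_1U_m-R_0QU_{m-1}$, a one-line computation using $U_m=PU_{m-1}-QU_{m-2}$ shows that $V_m=PV_{m-1}-QV_{m-2}$, so $(V_m)_m$ obeys the same recurrence as $(R_m)_m$. Since $V_1=R_1U_1-R_0QU_0=R_1$ and $V_2=R_1U_2-R_0QU_1=PR_1-QR_0=R_2$, the two sequences share their first two terms and therefore coincide for every $m\geq 1$. This version has the mild advantage of staying within $\mathbb{Z}$ throughout, which fits the integer-divisibility spirit of the paper.
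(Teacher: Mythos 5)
Your main argument is essentially the paper's own proof read in reverse: the paper starts from $R_m=a\alpha^m+b\beta^m$ and regroups terms to reach $R_1U_m-R_0QU_{m-1}$, while you expand the right-hand side and collapse it back to $a\alpha^m+b\beta^m$; both hinge on the same substitution $Q=\alpha\beta$ and the same regrouping, and your sign bookkeeping for $b$ is correct. The inductive variant you sketch is also valid (and stays in $\mathbb{Z}$), but the core proof matches the paper's approach.
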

\begin{proof}
Let $m$ be a positive integer. We have
\begin{align*}
R_m&=a{\alpha}^m+b{\beta}^m\\&=\left(\frac{R_1-\beta R_0}{\alpha-\beta}\right){\alpha}^m - \left(\frac{R_1-\alpha R_0}{\alpha-\beta}\right){\beta}^m\\&=R_1\left(\frac{{\alpha}^m-{\beta}^m}{\alpha-\beta}\right)-R_0\left(\frac{\beta{\alpha}^m-\alpha{\beta}^m}{\alpha-\beta}\right)\\&=R_1\left(\frac{{\alpha}^m-{\beta}^m}{\alpha-\beta}\right)-R_0(\alpha\beta)\left(\frac{{\alpha}^{m-1}-{\beta}^{m-1}}{\alpha-\beta}\right)\\&=R_1U_m-R_0QU_{m-1},
\end{align*}
as required. This completes the proof of the lemma.
\end{proof}

\begin{lemma}\label{dif}
For all positive integers $i$ and $j$, we have
\[\frac{U_{j-1}}{U_{j}}-\frac{U_{i-1}}{U_{i}}=\frac{U_{i}U_{j-1}-U_{i-1}U_{j}}{U_{i}U_{j}}=\begin{cases} -\frac{Q^{j-1}U_{i-j}}{U_i U_{j}}~~&\text{if}~i\geq j\\ \frac{Q^{i-1}U_{j-i}}{U_i U_{j}}~~&\text{otherwise}\end{cases}.\] 
Furthermore, if $i,j\geq 2$, then:
\[\frac{U_{i}}{U_{i-1}}-\frac{U_{j}}{U_{j-1}}=\frac{U_{i}U_{j-1}-U_{i-1}U_{j}}{U_{i-1}U_{j-1}}=\begin{cases} -\frac{Q^{j-1}U_{i-j}}{U_{i-1} U_{j-1}}~~&\text{if}~i\geq j\\ \frac{Q^{i-1}U_{j-i}}{U_{i-1} U_{j-1}}~~&\text{otherwise}\end{cases}.\]
\end{lemma}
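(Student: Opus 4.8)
The plan is to observe that both claimed identities share a single nontrivial ingredient, namely the ``cross difference'' $N_{i,j}:=U_iU_{j-1}-U_{i-1}U_j$. Indeed, the first equality in each display is pure fraction arithmetic: placing $U_{j-1}/U_j-U_{i-1}/U_i$ over the common denominator $U_iU_j$ produces the numerator $U_iU_{j-1}-U_{i-1}U_j=N_{i,j}$, and likewise $U_i/U_{i-1}-U_j/U_{j-1}$ over the common denominator $U_{i-1}U_{j-1}$ produces the same $N_{i,j}$. All denominators are nonzero because $\alpha/\beta$ is not a root of unity, so $U_n\neq 0$ for every $n\geq 1$ (and the hypothesis $i,j\geq 2$ in the second part guarantees that the indices $i-1,j-1$ remain positive). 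Hence the whole lemma reduces to evaluating $N_{i,j}$ and then dividing by the two denominators.

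First I would compute $N_{i,j}$ directly from the Binet formula $U_n=(\alpha^n-\beta^n)/(\alpha-\beta)$. Expanding the products $(\alpha^i-\beta^i)(\alpha^{j-1}-\beta^{j-1})$ and $(\alpha^{i-1}-\beta^{i-1})(\alpha^j-\beta^j)$ and subtracting, the pure powers $\alpha^{i+j-1}$ and $\beta^{i+j-1}$ cancel, leaving only the mixed terms. These regroup as $(\alpha-\beta)\bigl(\alpha^{j-1}\beta^{i-1}-\alpha^{i-1}\beta^{j-1}\bigr)$, so after dividing by $(\alpha-\beta)^2$ one obtains $N_{i,j}=\bigl(\alpha^{j-1}\beta^{i-1}-\alpha^{i-1}\beta^{j-1}\bigr)/(\alpha-\beta)$.

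The final step is the case distinction, handled by factoring out the appropriate power of $Q=\alpha\beta$. If $i\geq j$, I would pull $(\alpha\beta)^{j-1}=Q^{j-1}$ out of $\alpha^{j-1}\beta^{i-1}-\alpha^{i-1}\beta^{j-1}$, leaving $\beta^{i-j}-\alpha^{i-j}=-(\alpha-\beta)U_{i-j}$, which yields $N_{i,j}=-Q^{j-1}U_{i-j}$. The complementary case $i<j$ follows at once from the evident antisymmetry $N_{i,j}=-N_{j,i}$, giving $N_{i,j}=Q^{i-1}U_{j-i}$. Substituting these two values over the denominators $U_iU_j$ and $U_{i-1}U_{j-1}$ respectively delivers both displayed formulas.

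There is no serious obstacle here: the argument is a short Binet computation, and the only points demanding care are the correct exponent of $Q$ (which is $\min(i,j)-1$) and the sign bookkeeping in the two cases, both of which are controlled by the antisymmetry of $N_{i,j}$. An alternative proof by induction on $|i-j|$ via the recurrence $U_{n}=PU_{n-1}-QU_{n-2}$ is possible, but it is messier, so I would prefer the closed-form route.
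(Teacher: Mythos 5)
Your proposal is correct and follows essentially the same route as the paper: a direct Binet-formula computation of the cross difference $U_iU_{j-1}-U_{i-1}U_j$, factoring out $(\alpha\beta)^{\min(i,j)-1}=Q^{\min(i,j)-1}$ to get $-Q^{j-1}U_{i-j}$ when $i\geq j$, with the other case obtained by swapping $i$ and $j$ (your antisymmetry observation is just the paper's ``permute $i$ and $j$'' step made explicit). The only cosmetic difference is that you isolate the intermediate form $\bigl(\alpha^{j-1}\beta^{i-1}-\alpha^{i-1}\beta^{j-1}\bigr)/(\alpha-\beta)$ before the case split, whereas the paper factors within the case $i\geq j$ directly; both are the same computation.
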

\begin{proof}
Let $i$ and $j$ be two fixed positive integers. First, suppose that $i\geq j$. We have
\begin{align*}
U_{i}U_{j-1}-U_{i-1}U_{j}&=\frac{\left({\alpha}^{i}-{\beta}^{i}\right)\left({\alpha}^{j-1}-{\beta}^{j-1}\right)-\left({\alpha}^{i-1}-{\beta}^{i-1}\right)\left({\alpha}^{j}-{\beta}^{j}\right)}{(\alpha-\beta)^2}\\&=-\frac{{\alpha}^{i}{\beta}^{j-1}+{\alpha}^{j-1}{\beta}^{i}-{\alpha}^{i-1}{\beta}^{j}-{\alpha}^{j}{\beta}^{i-1}}{(\alpha-\beta)^2}\\&=-(\alpha\beta)^{j-1}\frac{\left(\alpha-\beta\right)\left({\alpha}^{i-j}-{\beta}^{i-j}\right)}{(\alpha-\beta)^2}\\&=-Q^{j-1}U_{i-j},
\end{align*}
which concludes to the required result. The case when $i<j$ follows by permuting $i$ and $j$ in the first case. This completes the proof. 
\end{proof}

\begin{lemma}\label{prod}
Let $n$ and $k$ be two positive integers such that $n\geq k$. Then for any $j\in\lbrace k,k+1,\dots,n\rbrace$, we have
\[\prod_{\begin{subarray}{c}k\leq i\leq n\\i\neq j\end{subarray}}\left( R_0 Q\left(\frac{U_{j-1}}{U_{j}}-\frac{U_{i-1}}{U_{i}}\right)\right)=(-1)^{n-j}{R}_{0}^{n-k} Q^{f(j,k,n)}\frac{{[j-k]}_{\boldsymbol{U}}!{[n-j]}_{\boldsymbol{U}}!}{{U}_{j}^{n-k-1}\left(U_kU_{k+1}\cdots U_{n}\right)}.\]
Furthermore, if $k\geq 2$, we have
\[\prod_{\begin{subarray}{c} k\leq i\leq n\\i\neq j\end{subarray}}\left( R_1\left(\frac{U_{i}}{U_{i-1}}-\frac{U_{j}}{U_{j-1}}\right)\right)=(-1)^{n-j} {R}_{1}^{n-k}Q^{f(j,k,n)-(n-k)}\frac{{[j-k]}_{\boldsymbol{U}}!{[n-j]}_{\boldsymbol{U}}!}{{U}_{j-1}^{n-k-1}\left(U_{k-1}U_{k}\cdots U_{n-1}\right)}.\]
\end{lemma}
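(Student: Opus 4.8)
The plan is to substitute the closed form from Lemma~\ref{dif} into each factor of the product and then carefully collect the resulting powers of $R_0$ (respectively $R_1$), of $Q$, and of the Lucas terms $U_\ell$. I would handle the two formulas separately but in exactly parallel fashion, so I describe the first one in detail and then indicate the modifications for the second.

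First I would split the index set $\{k,\dots,n\}\setminus\{j\}$ into the two ranges $i<j$ and $i>j$, since Lemma~\ref{dif} gives different expressions in each case. For $i<j$ the factor $R_0Q\bigl(\frac{U_{j-1}}{U_j}-\frac{U_{i-1}}{U_i}\bigr)$ becomes $\frac{R_0 Q^{i} U_{j-i}}{U_i U_j}$, while for $i>j$ it becomes $-\frac{R_0 Q^{j} U_{i-j}}{U_i U_j}$. Multiplying over $i=k,\dots,j-1$ and over $i=j+1,\dots,n$ respectively, the sign $(-1)^{n-j}$ comes solely from the $n-j$ factors with $i>j$, and the power of $R_0$ is $R_0^{(j-k)+(n-j)}=R_0^{n-k}$.

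The heart of the computation is then to recognise three collapses. The numerator Lucas terms $\prod_{i=k}^{j-1}U_{j-i}=U_{j-k}\cdots U_1$ and $\prod_{i=j+1}^{n}U_{i-j}=U_1\cdots U_{n-j}$ are exactly $[j-k]_{\boldsymbol{U}}!$ and $[n-j]_{\boldsymbol{U}}!$. The denominator contributes $U_j^{(j-k)+(n-j)}=U_j^{n-k}$ together with $\prod_{i\neq j}U_i=(U_k\cdots U_n)/U_j$, so altogether the denominator is $U_j^{n-k-1}(U_k\cdots U_n)$. Finally the exponent of $Q$ is $\sum_{i=k}^{j-1} i+j(n-j)$, which by the definition of $f(j,k,n)$ (where $\min(i,j)=i$ for $i<j$ and $\min(i,j)=j$ for $i>j$) is precisely $f(j,k,n)$. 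Assembling these pieces yields the first identity.

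For the second formula I would proceed identically using the second part of Lemma~\ref{dif}, the only bookkeeping differences being that each factor now carries $U_{i-1}U_{j-1}$ in its denominator rather than $U_iU_j$, and the $Q$-exponents are lowered: the factor for $i<j$ is $\frac{R_1 Q^{i-1}U_{j-i}}{U_{i-1}U_{j-1}}$ and for $i>j$ is $-\frac{R_1 Q^{j-1}U_{i-j}}{U_{i-1}U_{j-1}}$. The numerator factorials are unchanged, the denominator collapses to $U_{j-1}^{n-k-1}(U_{k-1}U_k\cdots U_{n-1})$ after the index shift $i\mapsto i-1$, and the new $Q$-exponent equals $\sum_{i=k}^{j-1}(i-1)+(j-1)(n-j)=f(j,k,n)-(n-k)$. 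The main point to watch is this last exponent identity together with the index shift in the denominator, which is exactly why the hypothesis $k\ge 2$ is needed so that $U_{k-1}$ is defined; everything else is a routine transcription of the first case.
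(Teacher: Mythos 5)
Your proposal is correct and follows exactly the route the paper intends: the paper dismisses this lemma as ``an immediate consequence of Lemma~\ref{dif},'' and your computation is precisely that consequence spelled out, with the split into $i<j$ and $i>j$, the sign, the powers of $R_0$, $R_1$ and $Q$, and the collapse of the Lucas factors all accounted for correctly (including the identification of the $Q$-exponents with $f(j,k,n)$ and $f(j,k,n)-(n-k)$). The only cosmetic remark is that $k\ge 2$ is needed not so much for $U_{k-1}$ to be \emph{defined} as to ensure $U_{k-1}\neq 0$ (since $U_0=0$) and that the second half of Lemma~\ref{dif} applies.
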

\begin{proof}
This is an immediate consequence of Lemma \ref{dif}.
\end{proof}

\begin{lemma}\label{4}
Let $n$ and $k$ be two positive integers such that $n\geq k$ and $n\geq 2$. Suppose that all the numbers $R_k,R_{k+1},\dots,R_n$ are not zero. If $R_0\neq 0$, then:
\begin{equation}\label{e5}
\frac{R_{0}^{n-k}}{R_kR_{k+1}\cdots R_n}=\sum_{j=k}^{n}\frac{(-1)^{n-j}{U}_{j}^{n-k}}{Q^{f(j,k,n)}{[j-k]}_{\boldsymbol{U}}!{[n-j]}_{\boldsymbol{U}}!}\cdot\frac{1}{R_j}.
\end{equation}
Moreover, if $R_1\neq 0$, then:
\begin{equation}\label{e6}
\frac{{R}_{1}^{n-k}}{R_kR_{k+1}\cdots R_n}=\sum_{j=k}^{n}\frac{(-1)^{n-j}{U}_{j-1}^{n-k}}{ Q^{f(j,k,n)-(n-k)}{[j-k]}_{\boldsymbol{U}}!{[n-j]}_{\boldsymbol{U}}!}\cdot\frac{1}{R_j}. 
\end{equation} 
\end{lemma}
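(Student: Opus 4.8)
The plan is to read both identities as partial-fraction decompositions, feed Lemma~\ref{L2} with cleverly chosen nodes, and evaluate the resulting products through Lemma~\ref{prod}. Recall from Lemma~\ref{eqdeR} that $R_i = R_1 U_i - R_0 Q U_{i-1}$. For \eqref{e5} I would set, for $i=k,k+1,\dots,n$,
\[z_i := \frac{R_1}{R_0 Q} - \frac{U_{i-1}}{U_i},\]
so that $R_i = R_0 Q\, U_i z_i$ and, crucially, $z_i - z_j = \frac{U_{j-1}}{U_j} - \frac{U_{i-1}}{U_i}$ depends only on the Lucas sequence. These $z_i$ are nonzero (because $R_i\neq 0$ and $U_i\neq 0$) and pairwise distinct: by Lemma~\ref{dif} an equality $z_i=z_{i'}$ would force $U_{i-i'}=0$, impossible for $i\neq i'$ since $\alpha/\beta$ is not a root of unity. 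Hence Lemma~\ref{L2} applies to $z_k,\dots,z_n$.

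Then I would simply substitute. The left-hand side of Lemma~\ref{L2} is $1/\prod_{i=k}^n z_i = (R_0Q)^{n-k+1}(U_k\cdots U_n)/(R_kR_{k+1}\cdots R_n)$; in each summand $1/z_j = R_0 Q\, U_j/R_j$, while $\prod_{i\neq j}(z_i-z_j)$ is obtained from the first identity of Lemma~\ref{prod} by discarding the $n-k$ factors $R_0 Q$. Inserting these and cancelling the common factor $U_k\cdots U_n$, the bookkeeping of the powers of $R_0$, $Q$ and $U_j$ collapses exactly to \eqref{e5}. This argument is valid for every $k\geq 1$, since only the nonvanishing terms $U_i$ with $i\geq 1$ occur. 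For \eqref{e6} the natural move is the symmetric one: put $w_i := \frac{U_i}{U_{i-1}} - \frac{R_0 Q}{R_1}$, so that $R_i = R_1 U_{i-1} w_i$ and $w_i-w_j = \frac{U_i}{U_{i-1}} - \frac{U_j}{U_{j-1}}$, and run Lemma~\ref{L2} together with the second identity of Lemma~\ref{prod}. This reproduces \eqref{e6} verbatim as long as $k\geq 2$, which is precisely the range in which every $w_i$ (equivalently every $U_{i-1}$) is defined.

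The hard part is the boundary case $k=1$ in \eqref{e6}, where $w_1 = U_1/U_0$ is undefined because $U_0=0$: this is the familiar phenomenon of a partial-fraction node escaping to infinity, so the $w$-method cannot be applied unchanged. I would handle it as follows. For $n\geq 2$ the $j=1$ term on the right of \eqref{e6} already vanishes, since its numerator carries $U_0^{\,n-1}=0$, so the missing node should contribute nothing. To prove the surviving identity cleanly I would reuse the \emph{finite} nodes $z_i$ of \eqref{e5}, but apply the generalized partial fraction obtained (by exactly the device that proves Lemma~\ref{L2}) from $g(z)\big/\prod_{i=k}^{n}(z-z_i)$ with $g(z) := \left(\frac{R_1}{R_0Q}-z\right)^{n-1}$. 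The point is that $U_{j-1}/U_j = \frac{R_1}{R_0Q}-z_j$, so $g(z_j)=(U_{j-1}/U_j)^{n-1}$ manufactures precisely the factor $U_{j-1}^{\,n-1}$ demanded by \eqref{e6}. Since $\deg g = n-1$ is one less than the number $n$ of nodes, the Lagrange cancellation kills every polynomial contribution and leaves only $g(0)\big/\prod_i z_i$; expanding $g(0)=(R_1/R_0Q)^{n-1}$ and simplifying the powers of $U_j$ (which cancel completely) yields \eqref{e6} for $k=1$ under the hypothesis $R_0\neq 0$.

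Finally, the residual case $R_0=0$ would follow by continuity, both sides of \eqref{e6} being rational functions of $R_0$ that remain regular there because the values $R_i=R_1U_i$ stay nonzero; equivalently one may view the $k=1$ identity as the limit of the $w$-method as the offending node tends to infinity. I expect this degenerate node to be the only genuine difficulty: once the substitutions $z_i$ and $w_i$ are in place, the verification of both \eqref{e5} and \eqref{e6} is entirely routine power-counting against Lemma~\ref{prod}.
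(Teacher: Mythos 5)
Your proof is correct and, for the main cases, is essentially the paper's own argument: up to dividing out the constants $R_0Q$ and $R_1$, your nodes $z_i$ and $w_i$ are exactly the paper's $R_{i}/U_{i}$ and $R_{i}/U_{i-1}$, fed into Lemma~\ref{L2} and evaluated through Lemma~\ref{prod}; the power-counting you describe checks out. The one genuine divergence is the boundary case $k=1$ of \eqref{e6}. The paper handles it with a two-line reduction: since $\frac{R_1^{n-1}}{R_1R_2\cdots R_n}=\frac{R_1^{n-2}}{R_2\cdots R_n}$, one applies the already-proved case $k=2$ and then observes that the $j=1$ term can be adjoined for free because $U_0=0$, using $f(j,1,n)-(n-1)=f(j,2,n)-(n-2)$ and ${[j-2]}_{\boldsymbol{U}}!\,U_{j-1}={[j-1]}_{\boldsymbol{U}}!$. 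Your route instead upgrades Lemma~\ref{L2} to the full Lagrange identity for $g(z)/\prod_i(z-z_i)$ with $\deg g=n-1$ and $g(z)=\left(\tfrac{R_1}{R_0Q}-z\right)^{n-1}$, which indeed produces the factors $U_{j-1}^{\,n-1}$ and yields \eqref{e6} for $k=1$ when $R_0\neq 0$; the residual case $R_0=0$ then follows by your rational-function (Zariski-density) argument, which is legitimate since both sides are rational in $R_0$, agree for all but finitely many real $R_0$, and stay regular at $R_0=0$ because $R_i=R_1U_i\neq 0$. Your version is heavier machinery for the same corner case, but it has the small merit of not needing the $k=2$ identity as an intermediate step; the paper's index-shift is shorter and stays entirely within the stated Lemma~\ref{L2}.
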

\begin{proof}
Assume that $R_0\neq 0$ and let us show that \eqref{e5} holds. By applying Lemma \ref{L2} to the finite sequence $z_i:=\frac{R_{i+k-1}}{U_{i+k-1}}=R_1-R_0Q\frac{U_{i+k-2}}{U_{i+k-1}}$ ($i=1,2,\dots,n-k+1$) and using Lemmas \ref{eqdeR} and \ref{prod}, we obtain
\begin{align*}
\frac{U_kU_{k+1}\cdots U_n}{R_kR_{k+1}\cdots R_n}&=\sum_{j=k}^{n}\frac{1}{\prod_{\begin{subarray}{c} k\leq i\leq n\\i\neq j\end{subarray}}\left(R_0Q\left(\frac{U_{j-1}}{U_{j}}-\frac{U_{i-1}}{U_{i}}\right)\right)}\cdot\frac{U_j}{R_j}\\&=\sum_{j=k}^{n}\frac{(-1)^{n-j}{U}_{j}^{n-k}\left(U_kU_{k+1}\cdots U_{n}\right)}{R_{0}^{n-k}Q^{f(j,k,n)}{[j-k]}_{\boldsymbol{U}}!{[n-j]}_{\boldsymbol{U}}!}\cdot\frac{1}{R_j}.
\end{align*}
Equivalently, we have
\[\frac{R_{0}^{n-k}}{R_kR_{k+1}\cdots R_n}=\sum_{j=k}^{n}\frac{(-1)^{n-j}{U}_{j}^{n-k}}{Q^{f(j,k,n)}{[j-k]}_{\boldsymbol{U}}!{[n-j]}_{\boldsymbol{U}}!}\cdot\frac{1}{R_j},\]
which confirms the first part of the lemma. Next, suppose that $R_1\neq 0$ and let us show that \eqref{e6} holds. First, assume that $k\geq 2$ (to avoid the case when $U_{k-1}=0$). By applying Lemma \ref{L2} to the finite sequence $z'_{i}:=\frac{R_{i+k-1}}{U_{i+k-2}}=R_1\frac{U_{i+k-1}}{U_{i+k-2}}-R_0Q$ ($i=1,2,\dots,n-k+1$), we get (according to the lemmas \ref{eqdeR} and \ref{prod})
\begin{align*}
\frac{U_{k-1}U_{k}\cdots U_{n-1}}{R_kR_{k+1}\cdots R_n}&=\sum_{j=k}^{n}\frac{1}{\prod_{\begin{subarray}{c} k\leq i\leq n\\i\neq j\end{subarray}}\left(R_1\left(\frac{U_{i}}{U_{i-1}}-\frac{U_{j}}{U_{j-1}}\right)\right)}\cdot\frac{U_{j-1}}{R_{j}}\\&=\sum_{j=k}^{n}\frac{(-1)^{n-j}{U}_{j-1}^{n-k}\left(U_{k-1}U_{k}\cdots U_{n-1}\right)}{{R}_{1}^{n-k} Q^{f(j,k,n)-(n-k)}{[j-k]}_{\boldsymbol{U}}!{[n-j]}_{\boldsymbol{U}}!}\cdot\frac{1}{R_j},
\end{align*}
which is equivalent to saying that:
\begin{equation}\label{suppp}
\frac{{R}_{1}^{n-k}}{R_kR_{k+1}\cdots R_n}=\sum_{j=k}^{n}\frac{(-1)^{n-j}{U}_{j-1}^{n-k}}{ Q^{f(j,k,n)-(n-k)}{[j-k]}_{\boldsymbol{U}}!{[n-j]}_{\boldsymbol{U}}!}\cdot\frac{1}{R_j},
\end{equation} 
as required. The case when $k=1$ follows by observing that for any $n\geq 2$, we have
\begin{align*}
\frac{{R}_{1}^{n-1}}{R_1R_{2}\cdots R_n}&=\frac{{R}_{1}^{n-2}}{R_{2}R_{3}\cdots R_n}\\&=\sum_{j=2}^{n}\frac{(-1)^{n-j}{U}_{j-1}^{n-2}}{ Q^{f(j,2,n)-(n-2)}{[j-2]}_{\boldsymbol{U}}!{[n-j]}_{\boldsymbol{U}}!}\cdot\frac{1}{R_j}~~~~(\text{according to \eqref{suppp}})\\&=\sum_{j=1}^{n}\frac{(-1)^{n-j}{U}_{j-1}^{n-1}}{ Q^{f(j,1,n)-(n-1)}{[j-1]}_{\boldsymbol{U}}!{[n-j]}_{\boldsymbol{U}}!}\cdot\frac{1}{R_j},
\end{align*}
(since $U_0=0$, $f(j,1,n)-(n-1)=f(j,2,n)-(n-2)$, and ${[j-2]}_{\boldsymbol{U}}!U_{j-1}={[j-1]}_{\boldsymbol{U}}!$ $(\forall j\geq 2)$). This confirms the required result and completes the proof of the lemma.
\end{proof}

\begin{lemma}\label{lucas}
For any positive integer $m$, we have $\gcd\left(U_m,Q\right)=1$.
\end{lemma}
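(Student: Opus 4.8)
The plan is to work modulo $Q$ and show that the Lucas sequence degenerates to a pure power of $P$, after which the coprimality hypothesis $\gcd(P,Q)=1$ finishes the job immediately. The starting observation is that the defining recurrence $U_{n+2}=PU_{n+1}-QU_n$ loses its second term upon reduction modulo $Q$, giving the much simpler congruence $U_{n+2}\equiv PU_{n+1}\pmod{Q}$ for all $n\geq 0$.

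The key step is then to establish, by a short induction on $m$, the closed form
\[U_m\equiv P^{\,m-1}\pmod{Q}\qquad(\forall m\geq 1).\]
The base case is immediate since $U_1=1=P^0$, and for the inductive step one applies the congruence above together with the induction hypothesis: $U_{m+1}\equiv PU_m\equiv P\cdot P^{\,m-1}=P^{m}\pmod{Q}$. (One may equally check $U_2=P$ directly as a second base value if preferred.)

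Finally, I would conclude by a standard gcd argument. Let $d:=\gcd(U_m,Q)$; then $d$ divides $Q$, so the congruence $U_m\equiv P^{\,m-1}\pmod{Q}$ also holds modulo $d$, whence $d\mid U_m$ forces $d\mid P^{\,m-1}$. Since $d\mid Q$ and $\gcd(P,Q)=1$, we have $\gcd(d,P)=1$ and therefore $\gcd(d,P^{\,m-1})=1$; combined with $d\mid P^{\,m-1}$ this yields $d=1$, which is exactly the claim $\gcd(U_m,Q)=1$.

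There is no genuine obstacle here: the only thing to be careful about is keeping the reduction modulo $Q$ separate from the reduction modulo $d$ in the last step, so that one correctly deduces $d\mid P^{m-1}$ rather than asserting anything stronger. This lemma is really just the remark that the hypothesis $\gcd(P,Q)=1$ propagates through the recurrence, and the proof above makes that propagation explicit without invoking the (heavier) strong divisibility property of $U(P,Q)$.
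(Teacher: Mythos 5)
Your proof is correct. Note that the paper does not actually prove this lemma: it simply cites Ribenboim \cite[Eq.\ (2.14)]{Rib}, so there is no internal argument to compare against. Your reduction modulo $Q$, giving $U_{m}\equiv P^{m-1}\pmod{Q}$ by induction (the recurrence $U_{n+2}=PU_{n+1}-QU_n$ collapses to $U_{n+2}\equiv PU_{n+1}$), followed by the observation that any common divisor $d$ of $U_m$ and $Q$ must divide $P^{m-1}$ while being coprime to $P$ (since $d\mid Q$ and $\gcd(P,Q)=1$ by the standing hypothesis of Section 3), is the standard elementary derivation of this fact and is complete as written; the base case $m=1$ is handled correctly by $U_1=1=P^0$, and the edge case $P^{0}=1$ in the final gcd step causes no trouble. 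What your version buys is self-containedness: the lemma is established from the recurrence and the hypothesis $\gcd(P,Q)=1$ alone, without appealing to an external reference or to the strong divisibility property of $U(P,Q)$, which is consistent with the paper's stated aim of avoiding the heavier machinery where possible.
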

\begin{proof}
See, e.g., \cite[Eq. (2.14)]{Rib}.
\end{proof}

\begin{lemma}\label{gcd1}
For any positive integer $m$, we have $\gcd\left(R_m,Q\right)=1$.
\end{lemma}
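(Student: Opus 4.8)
The plan is to reduce everything modulo the prime divisors of $Q$, using the closed form for $R_m$ that has already been established. First I would invoke Lemma \ref{eqdeR}, which gives $R_m = R_1 U_m - R_0 Q U_{m-1}$ for every positive integer $m$. The virtue of this identity is that its second term carries an explicit factor $Q$: so if $p$ is any prime dividing $Q$, reducing modulo $p$ kills that term and leaves $R_m \equiv R_1 U_m \pmod{p}$.

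Next I would show that this residue is a unit modulo $p$. By the standing hypothesis $\gcd(R_1,Q)=1$, we have $p\nmid R_1$; and by Lemma \ref{lucas}, $\gcd(U_m,Q)=1$, so $p\nmid U_m$. Since $p$ is prime, it cannot divide the product $R_1 U_m$, and therefore $p\nmid R_m$.

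Finally, since this holds for every prime divisor $p$ of $Q$, no such prime can divide $R_m$, which is precisely the assertion $\gcd(R_m,Q)=1$. (If $|Q|=1$ there are no prime divisors to check and the conclusion is vacuous, while the genuine content lives in the case $|Q|\geq 2$.) The argument is essentially immediate once Lemma \ref{eqdeR} and Lemma \ref{lucas} are available; the only points warranting a moment's care are the degenerate small cases, such as $m=1$, where $U_0=0$ forces $R_1 = R_1 U_1$ so that the $Q$-term is absent from the start, but these are harmless. I do not expect a real obstacle here: all the work has been front-loaded into the two preceding lemmas and into the coprimality hypotheses $\gcd(P,Q)=\gcd(R_1,Q)=1$, and the present statement is just the clean consequence of combining them.
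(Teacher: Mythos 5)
Your proposal is correct and is essentially the paper's own argument: both rest on the identity $R_m=R_1U_m-R_0QU_{m-1}$ from Lemma \ref{eqdeR} together with Lemma \ref{lucas} and the hypothesis $\gcd(R_1,Q)=1$. The only cosmetic difference is that you argue prime-by-prime over the divisors of $Q$, whereas the paper takes an arbitrary common divisor $d$ of $R_m$ and $Q$ and invokes the Gauss lemma to conclude $d\mid R_1$, hence $d=1$.
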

\begin{proof}
Let $m$ be a fixed positive integer. We will show that the only positive common divisor of $R_m$ and $Q$ is $1$, which concludes to the required result. Suppose that $d$ is an arbitrary positive common divisor of $R_m$ and $Q$. The number $d$ is then (according to Lemma \ref{eqdeR}) a divisor of the number $R_m+R_0QU_{m-1}=R_1U_m$. On the other hand, since $\gcd\left(U_m,Q\right)=1$ (by Lemma \ref{lucas}), it follows that $d$ and $U_m$ are relatively prime; so, $d$ divides $R_1$ (from the Gauss lemma) and $Q$. Next, since $\gcd(R_1,Q)=1$ (by hypothesis), then $d=1$, as required. The lemma is proved.
\end{proof}

We are now ready to prove Theorem \ref{T1}:

\begin{proof}[Proof of Theorem \ref{T1}]
If $n=1$ or one of the numbers $R_k,R_{k+1},\dots,R_n$ is zero, the result of the theorem is trivial. Suppose for the sequel that $n\geq 2$ and all the numbers $R_k,R_{k+1},\dots,R_n$ are not zero. We distinguish the following two cases:\\
\underline{1\textsuperscript{st}case:} (if $R_0,R_1\neq 0$).\\
By multiplying the two sides of \eqref{e5} and \eqref{e6} by $Q^{h(k,n)}{[n-k]}_{\boldsymbol{U}}!L_{k,n}$, we obtain (according to \eqref{e15} and \eqref{eeeee}):
\[\frac{Q^{h(k,n)}{[n-k]}_{\boldsymbol{U}}!L_{k,n}R_{0}^{n-k}}{R_k R_{k+1}\cdots R_n},\frac{Q^{h(k,n)}{[n-k]}_{\boldsymbol{U}}!L_{k,n}R_{1}^{n-k}}{R_k R_{k+1}\cdots R_n}\in\mathbb{Z}.\]
Consequently, we have
\[R_k R_{k+1}\cdots R_n~~\text{divides}~~Q^{h(k,n)}{[n-k]}_{\boldsymbol{U}}!L_{k,n}\left(\gcd\left(R_0,R_1\right)\right)^{n-k}.\]
Combining this with Lemma \ref{gcd1}, we get (according to the Gauss lemma):
\[R_k R_{k+1}\cdots R_n~~\text{divides}~~{[n-k]}_{\boldsymbol{U}}!L_{k,n}\left(\gcd\left(R_0,R_1\right)\right)^{n-k},\]
as required.\\
\underline{2\textsuperscript{nd}case:} (if $R_0=0$ or $R_1=0$).\\
In this case, one of the numbers $R_0$ or $R_1$ is not zero; so, one of the identities \eqref{e5} or \eqref{e6} holds. This immediately implies as above that:
\[R_k R_{k+1}\cdots R_n~~\text{divides}~~Q^{h(k,n)}{[n-k]}_{\boldsymbol{U}}!L_{k,n}R_{i}^{n-k},\]
for some $i\in\lbrace 0,1\rbrace$ such that $R_i\neq 0$. It then follows from Lemma \ref{gcd1} that:
\[R_kR_{k+1}\cdots R_n~~\text{divides}~~{[n-k]}_{\boldsymbol{U}}!L_{k,n} R_{i}^{n-k}={[n-k]}_{\boldsymbol{U}}!L_{k,n}\left(\gcd\left({R}_{0},R_1\right)\right)^{n-k},\]
which concludes to the required result and achieves the proof of the theorem.
\end{proof}

\subsection{Proof of Theorem \ref{T2}}

We shall need the following three elementary lemmas:

\begin{lemma}\label{terrr}
Suppose that $\Delta>0$. Then for any positive integer $t$, we have
\begin{equation}\label{ut}
\left|\alpha\right|^{t-2}\leq\left|U_t\right|\leq\left|\alpha\right|^{t}.
\end{equation}
In addition, the right-hand side inequality of \eqref{ut} holds for any $\Delta\neq 0$.
\end{lemma}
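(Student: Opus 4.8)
The plan is to prove the bounds on $|U_t|$ using the closed-form expression $U_t = (\alpha^t - \beta^t)/(\alpha - \beta)$ together with the standing assumptions. Since $\Delta = P^2 - 4Q > 0$ and $P, Q$ are integers, the roots $\alpha, \beta$ of $x^2 - Px + Q$ are real and distinct, with $|\alpha| \geq |\beta|$ by convention. First I would record the two facts that drive everything: $|\alpha| \geq 1$ (noted in the introduction, since $Q = \alpha\beta \in \mathbb{Z}^*$ forces $|\alpha| \geq 1$), and $|\beta| \leq |\alpha|$, so $|\beta| = |Q|/|\alpha| \leq |\alpha|$. I would also need $|\alpha - \beta| = \sqrt{\Delta} \geq 1$, which follows because $\Delta$ is a positive integer.

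For the upper bound $|U_t| \leq |\alpha|^t$, I would estimate directly:
\[
|U_t| = \frac{|\alpha^t - \beta^t|}{|\alpha - \beta|} \leq \frac{|\alpha|^t + |\beta|^t}{|\alpha - \beta|}.
\]
Using $|\beta| \leq |\alpha|$ gives a numerator at most $2|\alpha|^t$, so it suffices to show $2 \leq |\alpha - \beta| \cdot |\alpha|^{t}/|\alpha|^t$ in the worst case; more cleanly, I would use the factorization $U_t = \alpha^{t-1} + \alpha^{t-2}\beta + \cdots + \beta^{t-1}$ (the standard geometric-sum identity for $(\alpha^t-\beta^t)/(\alpha-\beta)$), which has $t$ terms each of modulus at most $|\alpha|^{t-1}$, giving $|U_t| \leq t|\alpha|^{t-1}$. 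This is not quite $|\alpha|^t$ when $|\alpha|$ is close to $1$, so the sharper route is needed: since the $U_t$ are rational integers, a cleaner bound comes from noting $|U_t| \le |\alpha|^{t-1}\frac{1-(|\beta|/|\alpha|)^t}{1-|\beta|/|\alpha|}$ when $|\alpha|>|\beta|$, and this sum is at most $|\alpha|^{t-1}\cdot\frac{|\alpha|}{|\alpha|-|\beta|} \le |\alpha|^t$ precisely when $|\alpha|-|\beta|\ge 1$; I would verify $|\alpha|-|\beta|\geq 1$ from the fact that $(\alpha-\beta)^2 = \Delta \ge 1$ together with sign considerations on $\alpha,\beta$, handling the subcase $\beta<0<\alpha$ (where $|\alpha|-|\beta|$ could be small) separately by bounding $|\alpha^t-\beta^t|$ against $|\alpha-\beta|\,|\alpha|^{t-1}$ via the alternating structure. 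The remark that the upper bound survives for any $\Delta \neq 0$ is handled by the same term-count argument, since $|\alpha| \geq |\beta|$ still holds for complex conjugate roots.

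For the lower bound $|\alpha|^{t-2} \leq |U_t|$, I would write
\[
|U_t| = \frac{|\alpha^t - \beta^t|}{|\alpha - \beta|} \geq \frac{|\alpha|^t - |\beta|^t}{|\alpha - \beta|} \geq \frac{|\alpha|^t - |\beta|^t}{2|\alpha|},
\]
using $|\alpha - \beta| \leq |\alpha| + |\beta| \leq 2|\alpha|$. The goal then reduces to showing $|\alpha|^t - |\beta|^t \geq 2|\alpha|^{t-1}$, equivalently $|\alpha| - |\beta|^t/|\alpha|^{t-1} \geq 2/|\alpha| \cdot |\alpha|$, which I expect to require the separation $|\alpha| \geq |\beta| + $ (something), again tied to $\sqrt{\Delta} \geq 1$. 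I anticipate this lower bound is the main obstacle: the crude triangle-inequality estimate $|\alpha^t-\beta^t|\geq |\alpha|^t-|\beta|^t$ can be wasteful when $|\alpha|$ and $|\beta|$ are comparable, and the shift by $2$ in the exponent $t-2$ is exactly the slack needed to absorb the factor $|\alpha - \beta| \le 2|\alpha|$ together with the geometric correction. The clean way is to combine $|\alpha-\beta|=\sqrt{\Delta}$ with the identity $|\alpha|^t-|\beta|^t = (|\alpha|-|\beta|)\sum_{i=0}^{t-1}|\alpha|^i|\beta|^{t-1-i} \ge (|\alpha|-|\beta|)|\alpha|^{t-1}$ in the equal-sign case $\alpha,\beta>0$, and to treat the case of roots with opposite signs directly; the factor $|\alpha|^{-2}$ then comfortably covers the ratio $(|\alpha|-|\beta|)/|\alpha-\beta|$ bounded below by a quantity of size $1/|\alpha|$.
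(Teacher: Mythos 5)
The paper gives no proof of this lemma at all --- it simply cites \cite[Lemma 2.7]{Bousla} --- so the only question is whether your sketch actually closes, and it does not. For the upper bound your viable route is $|U_t|\le |\alpha|^{t}/\bigl(|\alpha|-|\beta|\bigr)$, which requires $|\alpha|-|\beta|\ge 1$. You correctly sense that this is the sticking point when $\beta<0<\alpha$, but you never prove it there; you defer to an unspecified argument ``via the alternating structure,'' which is not a proof. The missing observation is elementary: for real roots of opposite signs, $|\alpha|-|\beta|=|\alpha+\beta|=|P|\ge 1$, while for roots of the same sign $|\alpha|-|\beta|=|\alpha-\beta|=\sqrt{\Delta}\ge 1$; with that, your geometric-series bound does yield $|U_t|\le|\alpha|^{t}$ for $\Delta>0$. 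Your treatment of the ``$\Delta\neq 0$'' clause, however, is broken beyond repair as written: you fall back on the term-count bound $|U_t|\le t|\alpha|^{t-1}$, which you yourself conceded two sentences earlier is not $\le|\alpha|^{t}$, and for complex conjugate roots $|\alpha|=|\beta|$, so the ratio bound degenerates. Worse, that clause resists any such argument: for $P=5$, $Q=7$ (so $\Delta=-3$, $\gcd(P,Q)=1$, and $\alpha/\beta$ is not a root of unity since its minimal polynomial $x^{2}-\tfrac{11}{7}x+1$ is not integral) one has $|\alpha|^{2}=7$ and $U_4=55>49=|\alpha|^{4}$.

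The lower bound is also not established. Your reduction to ``$|\alpha|^{t}-|\beta|^{t}\ge 2|\alpha|^{t-1}$'' is false in general (for Fibonacci, $\Phi^{t}-\Phi^{-t}<2\Phi^{t-1}$ for every $t$), and the case you postpone --- opposite signs, $t$ even --- is precisely where the inequality is tight: $F_2=1=\Phi^{0}$, so there is no slack to absorb a crude loss like $|\alpha-\beta|\le 2|\alpha|$. The same-sign case is indeed immediate, since $U_t=\sum_{i=0}^{t-1}\alpha^{i}\beta^{t-1-i}\ge\alpha^{t-1}$ already. For $\beta<0<\alpha$ the clean step you are missing is $|\beta|^{t}\le|\beta|^{2}\alpha^{t-2}$ for $t\ge 2$, which gives
\begin{equation*}
|U_t|\;\ge\;\frac{\alpha^{t}-|\beta|^{t}}{\alpha+|\beta|}\;\ge\;\frac{\alpha^{t}-|\beta|^{2}\alpha^{t-2}}{\alpha+|\beta|}\;=\;\alpha^{t-2}\bigl(\alpha-|\beta|\bigr)\;=\;P\,\alpha^{t-2}\;\ge\;\alpha^{t-2},
\end{equation*}
with $t=1$ trivial. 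So the statement (for $\Delta>0$) is provable along lines close to yours, but the two decisive inequalities --- $|\alpha|-|\beta|=|P|\ge1$ in the mixed-sign case and $|\beta|^{t}\le|\beta|^{2}\alpha^{t-2}$ --- are exactly the ones your plan leaves open, and the final sentence of the lemma cannot be salvaged by your method at all.
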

\begin{proof}
See, e.g., \cite[Lemma 2.7]{Bousla}.
\end{proof}

\begin{lemma}\label{estU2}
Suppose that $\Delta>0$. Then for all positive integers $m$ and $\ell$ such that $m\geq \ell$, we have
\[ \left|U_{\ell}\binom{m}{\ell}_{\bf{U}}\right|\geq\left|\alpha\right|^{\ell\left(m-\ell-1\right)-1}.\]
\end{lemma}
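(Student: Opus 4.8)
The plan is to reduce everything to the two-sided estimate $|\alpha|^{t-2}\le |U_t|\le |\alpha|^{t}$ of Lemma \ref{terrr} (available since $\Delta>0$) after cancelling the factor $U_{\ell}$ against the denominator of the $\boldsymbol{U}$-binomial coefficient. Recalling that
\[\binom{m}{\ell}_{\boldsymbol{U}}=\frac{U_m U_{m-1}\cdots U_{m-\ell+1}}{U_1 U_2\cdots U_{\ell}},\]
the crucial first step is to observe that $U_{\ell}$ cancels one denominator factor, so that
\[U_{\ell}\binom{m}{\ell}_{\boldsymbol{U}}=\frac{U_m U_{m-1}\cdots U_{m-\ell+1}}{U_1 U_2\cdots U_{\ell-1}},\]
a quotient of the $\ell$ terms of $\boldsymbol{U}$ with indices running from $m-\ell+1$ up to $m$ (all $\ge 1$, since $m\ge \ell$) by the $\ell-1$ terms with indices $1,\dots,\ell-1$. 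The case $\ell=1$ is immediate, the denominator then being the empty product and the whole quantity equal to $U_m$.

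Next I would bound the numerator from below and the denominator from above by applying Lemma \ref{terrr} factor by factor: the lower bound $|U_t|\ge |\alpha|^{t-2}$ to each of the $\ell$ numerator terms and the upper bound $|U_t|\le |\alpha|^{t}$ to each of the $\ell-1$ denominator terms. This yields $\left|U_{\ell}\binom{m}{\ell}_{\boldsymbol{U}}\right|\ge |\alpha|^{E}$, where
\[E=\sum_{t=m-\ell+1}^{m}(t-2)-\sum_{t=1}^{\ell-1}t.\]
The remaining work is the evaluation of these two arithmetic progressions: the numerator exponent equals $\tfrac{\ell(2m-\ell+1)}{2}-2\ell$ and the denominator exponent equals $\tfrac{\ell(\ell-1)}{2}$, so a short simplification gives $E=\ell(m-\ell-1)$. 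Since $|\alpha|\ge 1$, this already establishes the slightly stronger bound $|\alpha|^{\ell(m-\ell-1)}$, hence a fortiori the claimed $|\alpha|^{\ell(m-\ell-1)-1}$.

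I do not expect a genuine obstacle; the substance lies in the exact cancellation of $U_{\ell}$ and the bookkeeping of the two index ranges. The only points demanding care are checking that every numerator index is at least $1$ so that the lower bound of Lemma \ref{terrr} applies (including the degenerate cases $U_1=1$ and $U_2=P$, where the bound reads $|\alpha|^{-1}\le 1$ and $1\le |P|$, both valid because $|\alpha|\ge 1$ and $|P|\ge 1$), and correctly isolating the case $\ell=1$ so that the empty denominator product causes no trouble.
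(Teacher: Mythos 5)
Your proof is correct and follows exactly the route the paper intends: the paper's own proof of this lemma is the single line ``this is an immediate consequence of Lemma \ref{terrr},'' and your argument is precisely that deduction carried out in detail (cancel $U_{\ell}$, bound the $\ell$ numerator factors below by $|\alpha|^{t-2}$ and the $\ell-1$ denominator factors above by $|\alpha|^{t}$, and sum the exponents). Your bookkeeping even yields the marginally sharper exponent $\ell(m-\ell-1)$, from which the stated bound follows since $|\alpha|\geq 1$.
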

\begin{proof}
This is an immediate consequence of Lemma \ref{terrr}.
\end{proof}

\begin{lemma}\label{p1p2}
Suppose that $P,Q\in\mathbb{N^*}$. Then:
\begin{enumerate}
\item We have $\alpha>\beta>0$ and $U_m\geq 1$ $(\forall m\geq 1)$.\label{pp1}
\item For any natural number $m$, we have $U_{m+1}\geq \alpha U_m$.\label{pp2}
\end{enumerate}
\end{lemma}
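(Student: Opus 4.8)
The plan is to prove Lemma~\ref{p1p2} directly from the explicit roots of the companion polynomial together with the Binet formula for $U_m$. Recall that $\alpha$ and $\beta$ are the zeros of $x^2-Px+Q$, so $\alpha+\beta=P$ and $\alpha\beta=Q$, and since we assumed $|\alpha|\geq|\beta|$ and $\Delta=P^2-4Q>0$, both roots are real and given by $\alpha=\frac{P+\sqrt{\Delta}}{2}$, $\beta=\frac{P-\sqrt{\Delta}}{2}$. For part~\ref{pp1}, first I would establish $\alpha>\beta>0$. Positivity of $\beta$ is the crucial inequality: since $P>0$ and $Q>0$, we have $\beta=\frac{P-\sqrt{P^2-4Q}}{2}$, and $\beta>0$ is equivalent to $P>\sqrt{P^2-4Q}$, i.e.\ to $P^2>P^2-4Q$, i.e.\ to $Q>0$, which holds by hypothesis. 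The strict inequality $\alpha>\beta$ is immediate from $\sqrt{\Delta}>0$.

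Once $\alpha>\beta>0$ is in hand, the claim $U_m\geq 1$ for all $m\geq 1$ follows quickly. Using $U_m=\frac{\alpha^m-\beta^m}{\alpha-\beta}$ and the factorization
\[
U_m=\alpha^{m-1}+\alpha^{m-2}\beta+\cdots+\alpha\beta^{m-2}+\beta^{m-1}=\sum_{i=0}^{m-1}\alpha^{m-1-i}\beta^{i},
\]
every summand is strictly positive because $\alpha,\beta>0$; in particular the term $\beta^{m-1}>0$ alone would suffice, but more directly $U_1=1$ and, since the summands are positive, $U_m$ is a sum of positive reals. To get the integer lower bound $U_m\geq 1$ rather than merely $U_m>0$, I would invoke that $U_m\in\mathbb{Z}$ (as $U(P,Q)$ is an integer sequence) together with $U_m>0$, forcing $U_m\geq 1$.

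For part~\ref{pp2}, I would use the recurrence $U_{m+1}=PU_m-QU_{m-1}$ and compare it with $\alpha U_m$. The difference is
\[
U_{m+1}-\alpha U_m=(P-\alpha)U_m-QU_{m-1}=\beta U_m-QU_{m-1}=\beta U_m-\alpha\beta U_{m-1}=\beta\bigl(U_m-\alpha U_{m-1}\bigr),
\]
using $P-\alpha=\beta$ and $Q=\alpha\beta$. Thus, setting $D_m:=U_{m+1}-\alpha U_m$, I obtain the recursion $D_m=\beta\,D_{m-1}$, whence $D_m=\beta^m D_0$. Since $U_0=0$ and $U_1=1$, we have $D_0=U_1-\alpha U_0=1>0$, and therefore $D_m=\beta^m>0$ for all $m$ (as $\beta>0$). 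This yields $U_{m+1}\geq\alpha U_m$ with equality never occurring, in fact with the explicit gap $U_{m+1}-\alpha U_m=\beta^m$.

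The argument is entirely elementary and I do not anticipate a genuine obstacle; the only point requiring a little care is part~\ref{pp1}, where one must use the hypothesis $Q>0$ (not merely $Q\neq 0$) to conclude $\beta>0$, and must separately invoke integrality of $U_m$ to upgrade $U_m>0$ to $U_m\geq 1$. The telescoping identity $D_m=\beta^m$ in part~\ref{pp2} is the cleanest route and avoids any induction bookkeeping, though one could equally prove $U_{m+1}\geq\alpha U_m$ by a straightforward induction using the recurrence and part~\ref{pp1}.
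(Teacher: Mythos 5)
Your proof is correct and follows essentially the same route as the paper: the paper declares part~\ref{pp1} immediate (you simply spell out the root formulas and the positivity of the Binet sum), and for part~\ref{pp2} the paper computes $\frac{U_{m+1}}{U_m}=\alpha+\frac{\beta^m(\alpha-\beta)}{\alpha^m-\beta^m}\geq\alpha$, which is exactly your identity $U_{m+1}-\alpha U_m=\beta^m$ divided by $U_m$; you reach it by telescoping the recurrence instead of directly from the Binet formula, but the key fact is identical. One small remark: both you and the paper implicitly need $\Delta>0$ (not just $P,Q\in\mathbb{N}^*$) for $\alpha>\beta>0$ to make sense, which is indeed the hypothesis in force where the lemma is applied (Theorem~\ref{T2}).
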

\begin{proof}
The first point of the lemma is immediate. Let $m\in\mathbb{N}$ be fixed and let us prove the point \ref{pp2} of the lemma. For $m=0$, the result is trivial. Suppose for the sequel that $m\geq 1$. According to the point \ref{pp1} of the lemma, we have
\begin{align*}
\frac{U_{m+1}}{U_{m}}=\frac{{\alpha}^{m+1}-{\beta}^{m+1}}{{\alpha}^{m}-{\beta}^{m}}=\frac{\alpha\left({\alpha}^{m}-{\beta}^{m}\right)+\alpha {\beta}^m-{\beta}^{m+1}}{{\alpha}^{m}-{\beta}^{m}}=\alpha + \frac{{\beta}^m(\alpha-\beta)}{{\alpha}^{m}-{\beta}^{m}}\geq \alpha,
\end{align*} 
as required. This completes the proof.
\end{proof}

Now, we are ready to prove Theorem \ref{T2}:

\begin{proof}[Proof of Theorem \ref{T2}]
By taking $R_0=c$ and $R_1=cP+d$, we have clearly $\gcd\left(R_0,R_1\right)=\gcd(c,d)$. Next, according to Lemma \ref{eqdeR}, we have for any positive integer $t$:
\[R_t=R_1U_t-R_0QU_{t-1}=\left(cP+d\right)U_t-cQU_{t-1}=c\left(PU_t-QU_{t-1}\right)+dU_{t}=cU_{t+1}+dU_{t}.\]
Combining this with Lemma \ref{p1p2}, we get $R_t\geq (c\alpha+d)U_t$ $(\forall t\in\mathbb{N^*})$. It then follows from Theorem \ref{T1}, Inequality \eqref{inc}, and Lemma \ref{estU2} that for all $n,m\in\mathbb{N^*}$ such that $m\leq\left\lfloor \frac{n}{2}\right\rfloor$, we have
\begin{align*}
\lcm\left(R_m,R_{m+1},\dots,R_n\right)&\geq \max_{m\leq k\leq n}\left\lbrace\frac{R_kR_{k+1}\cdots R_n}{U_1U_2\cdots U_{n-k}\left(\gcd\left(R_0,R_1\right)\right)^{n-k}}\right\rbrace\\&\geq \gcd(c,d)\max_{m\leq k\leq n}\left\lbrace{\left(\frac{c\alpha+d}{\gcd(c,d)}\right)}^{n-k+1}\frac{U_{k}U_{k+1}\cdots U_{n}}{U_1U_2\cdots U_{n-k}}\right\rbrace\\&= \gcd(c,d)\max_{m\leq k\leq n}\left\lbrace {\left(\frac{c\alpha+d}{\gcd(c,d)}\right)}^{n-k+1} U_{k}{\binom{n}{k}}_{\boldsymbol{U}}\right\rbrace
\\&\geq \gcd(c,d)\max_{m\leq k\leq n}\left\lbrace{\left(\frac{c\alpha+d}{\gcd(c,d)}\right)}^{n-k+1}{\alpha}^{k(n-k-1)-1}\right\rbrace
\\&\geq  \gcd(c,d) {\left(\frac{c\alpha+d}{\gcd(c,d)}\right)}^{n-\lfloor n/2\rfloor+1} {\alpha}^{\lfloor n/2\rfloor(n-\lfloor n/2\rfloor-1)-1}
\\&\geq \gcd(c,d) {\left(\frac{c\alpha+d}{\gcd(c,d)}\right)}^{n/2+1} {\alpha}^{n^2/4-n/2+\left(n/2-\lfloor n/2\rfloor \right)-\left(n/2-\lfloor n/2\rfloor \right)^2}
\\&\geq \gcd(c,d){\left(\frac{c\alpha+d}{\gcd(c,d)}\right)}^{n/2+1} {\alpha}^{n^2/4-n/2},
\end{align*}
which concludes to the required result and completes the proof of the theorem.
\end{proof}


\subsection{Proofs of Theorems \ref{T6} and \ref{T3} and Corollary \ref{fibo}}

We shall use the following lemma:

\begin{lemma}\label{min}
Suppose that $P>0$, $Q<0$, and $R_0,R_1\in\mathbb{N^*}$. Then we have
\[R_m\geq \left(R_1+R_0\left|\beta\right|\right)\alpha^{m-2}~~~~(\forall m\geq 2).\]
\end{lemma}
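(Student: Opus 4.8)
The plan is to use the closed-form expression $R_m = a\alpha^m + b\beta^m$ and control the contribution of the $\beta$ term. Under the hypotheses $P>0$, $Q<0$, we have $\Delta = P^2 - 4Q > 0$, so $\alpha$ and $\beta$ are real; moreover $Q = \alpha\beta < 0$ forces $\alpha$ and $\beta$ to have opposite signs. Since $|\alpha| \geq |\beta|$ and $\alpha + \beta = P > 0$, I expect $\alpha > 0 > \beta$ with $\alpha > |\beta|$. Thus $\beta^m$ alternates in sign, and the key structural fact is that $\alpha^{m-2}$ is the natural scale against which everything should be measured.

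First I would rewrite $R_m$ using Lemma \ref{eqdeR}, namely $R_m = R_1 U_m - R_0 Q U_{m-1}$. Since $Q < 0$, we have $-R_0 Q U_{m-1} = R_0 |Q| U_{m-1}$, and because $P>0, Q<0$ make $U_m \geq 1$ for all $m \geq 1$ (the recurrence $U_{m+1} = P U_m - Q U_{m-1} = P U_m + |Q| U_{m-1}$ has all-positive coefficients and positive initial data), every term in $R_m = R_1 U_m + R_0 |Q| U_{m-1}$ is nonnegative. So I would lower-bound $R_m \geq R_1 U_m + R_0 |Q| U_{m-1}$ and then estimate $U_m$ and $U_{m-1}$ from below in terms of $\alpha^{m-2}$.

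The cleanest route is to bound $U_m$ from below directly. Writing $U_m = (\alpha^m - \beta^m)/(\alpha - \beta)$, I would aim to show $U_m \geq \alpha^{m-1}$ and $|Q| U_{m-1} \geq |\beta|\,\alpha^{m-2}$ for $m \geq 2$, which together would give $R_m \geq R_1 \alpha^{m-1} + R_0 |\beta| \alpha^{m-2} = (R_1 + R_0|\beta|)\alpha^{m-2}\cdot(\alpha/1)$ — so I should calibrate the exponents carefully to land exactly on $(R_1 + R_0|\beta|)\alpha^{m-2}$. The factorization $Q U_{m-1} = \alpha\beta \cdot (\alpha^{m-1}-\beta^{m-1})/(\alpha-\beta)$ is what produces the $|\beta|$ factor; combining the $R_1 U_m$ piece (scaled by $\alpha^{m-1}$) and the $R_0 |Q| U_{m-1}$ piece (scaled by $|\beta|\alpha^{m-2}$) should yield a common factor $\alpha^{m-2}$ with coefficient $R_1 \alpha + R_0 |\beta|$, and since $\alpha \geq 1$ this dominates $R_1 + R_0|\beta|$.

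The main obstacle will be getting the inequality $U_m \geq \alpha^{m-1}$ to hold with the correct sign behavior: because $\beta < 0$, the quantity $\alpha^m - \beta^m$ is $\alpha^m + |\beta|^m$ when $m$ is odd but $\alpha^m - |\beta|^m$ when $m$ is even, so the even case is the binding one and requires showing $\alpha^m - |\beta|^m \geq (\alpha - \beta)\alpha^{m-1} = (\alpha + |\beta|)\alpha^{m-1}$, which is false as stated — this signals that a slightly looser per-term bound is needed and the two contributions must be combined \emph{before} estimating, rather than bounded separately. I would therefore handle the sum $R_1 U_m + R_0|Q|U_{m-1}$ as a single expression, grouping the $\alpha^m$ and $\beta$-terms and using $\alpha > |\beta|$ together with $R_0, R_1 \geq 1$ to absorb the alternating error, most likely via an induction on $m$ anchored at $m = 2$ using the recurrence $R_{m+1} = P R_m + |Q| R_{m-1} \geq \alpha R_m$, which propagates the bound cleanly once the base case is verified.
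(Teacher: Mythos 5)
Your starting point --- the decomposition $R_m=R_1U_m+R_0|Q|U_{m-1}$ with both terms nonnegative, and the observation that $-Q=\alpha|\beta|$ is what produces the $|\beta|$ in the target --- coincides with the paper's, and your sign analysis ($\alpha>|\beta|>0>\beta$, $U_t\geq 1$) is correct. But both concrete routes you then propose break down. The per-term bound $U_m\geq\alpha^{m-1}$ is false (for $(P,Q)=(1,-1)$ one already has $U_2=1<\Phi$), and you rightly sense this; what you miss is that you do not need it. The target only requires $U_m\geq\alpha^{m-2}$ and $U_{m-1}\geq\alpha^{m-3}$, which is exactly Lemma \ref{terrr} (applicable because $Q<0$ forces $\Delta=P^2-4Q>0$, and $U_t\geq 1>0$ removes the absolute values). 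With that calibration, $R_1U_m\geq R_1\alpha^{m-2}$ and $R_0\alpha|\beta|U_{m-1}\geq R_0\alpha|\beta|\,\alpha^{m-3}=R_0|\beta|\alpha^{m-2}$, and adding the two pieces finishes the proof term by term, with no induction --- this is precisely the paper's argument.

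Your fallback, an induction driven by $R_{m+1}=PR_m+|Q|R_{m-1}\geq\alpha R_m$, has a false step. Writing $P=\alpha-|\beta|$ and $|Q|=\alpha|\beta|$, one gets $R_{m+1}-\alpha R_m=|\beta|\left(\alpha R_{m-1}-R_m\right)$, so the claimed inequality is equivalent to $R_m\leq\alpha R_{m-1}$, and the ratio $R_m/R_{m-1}$ oscillates around $\alpha$: for $(P,Q)=(1,-1)$ and $R_0=R_1=1$ one has $R_2=2>\Phi=\alpha R_1$, hence $R_3=3<\alpha R_2\approx 3.236$. So the one-step induction does not propagate, even though the conclusion of the lemma remains true in this example. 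If you insist on induction you must carry two consecutive terms: assuming the bound at $m-1$ and $m$, the identity $P\alpha+|Q|=P\alpha-Q=\alpha^2$ gives $R_{m+1}\geq\left(R_1+R_0|\beta|\right)\alpha^{m-3}\left(P\alpha+|Q|\right)=\left(R_1+R_0|\beta|\right)\alpha^{m-1}$, which closes (after checking two base cases). As written, however, the proposal has a genuine gap at its decisive step.
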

\begin{proof}
First, it is easy to show that $\beta<0$, $\alpha>|\beta|>0$, and $U_t\geq 1$ $(\forall t\in\mathbb{N^*})$; so, by Lemma \ref{terrr}, we have $U_t\geq {\alpha}^{t-2}$ $(\forall t\in\mathbb{N^*})$. This with Lemma \ref{eqdeR} imply that for any integer $m\geq 2$, we have
\begin{align*}
R_m&=R_1U_m-R_0QU_{m-1}\\&=R_1U_m-R_0\alpha\beta U_{m-1}\\&= R_1U_m+R_0\alpha\left|\beta\right|U_{m-1}\\&\geq \left(R_1+R_0\left|\beta\right|\right)\alpha^{m-2},
\end{align*}
which concludes to the required result. This completes the proof.
\end{proof}

\begin{proof}[Proof of Theorem \ref{T6}]
Let $n\geq 2$ be an integer and put ${\ell}_n:=\left\lfloor\frac{n+1}{2}\right\rfloor+1$. By applying successively Theorem \ref{T1}, Inequality \eqref{inc}, and Lemma \ref{min}, we get for any $m\in\lbrace 1,2,\dots,{\ell}_n\rbrace$:
\begin{align*}
\lcm\left(R_m,R_{m+1},\dots,R_n\right)&\geq \max_{m\leq k\leq n}\left\lbrace\frac{R_kR_{k+1}\cdots R_n}{U_1U_2\dots U_{n-k}{\left(\gcd\left(R_0,R_1\right)\right)}^{n-k}}\right\rbrace
\\&\geq \max_{{\ell}_n\leq k\leq n}\left\lbrace\frac{R_kR_{k+1}\cdots R_n}{U_1U_2\dots U_{n-k}{\left(\gcd\left(R_0,R_1\right)\right)}^{n-k}}\right\rbrace
\\&\geq \gcd\left(R_0,R_1\right)\max_{{\ell}_n\leq k\leq n}\left\lbrace\left(\frac{R_1+R_0\left|\beta\right|}{\gcd\left(R_0,R_1\right)}\right)^{n-k+1}\frac{\alpha^{(k-2)+(k-1)+\dots+(n-2)}}{\alpha^{1+2+\dots+(n-k-1)}}\right\rbrace
\\&=\gcd\left(R_0,R_1\right)\max_{{\ell}_n\leq k\leq n}\left\lbrace\left(\frac{R_1+R_0\left|\beta\right|}{\gcd\left(R_0,R_1\right)}\right)^{n-k+1}{\alpha}^{k(n-k+2)-(n+2)}\right\rbrace
\\&\geq \gcd\left(R_0,R_1\right)\left(\frac{R_1+R_0\left|\beta\right|}{\gcd\left(R_0,R_1\right)}\right)^{n-{\ell}_n+1}{\alpha}^{{\ell}_n(n-{\ell}_n+2)-(n+2)}
\\&\geq \gcd\left(R_0,R_1\right)\left(\frac{R_1+R_0\left|\beta\right|}{\gcd\left(R_0,R_1\right)}\right)^{\frac{n-1}{2}}{\alpha}^{\frac{n^2}{4}-\frac{n}{2}-\frac{7}{4}},
\end{align*}
which confirms the required result and achieves the proof of the theorem.
\end{proof}

\begin{proof}[Proof of Corollary \ref{fibo}]
By taking $P=1$, $Q=-1$, $R_0=d$, and $R_1=c$, we have (according to Lemma \ref{eqdeR}): $R_n=cF_n+dF_{n-1}$ $(\forall n\in\mathbb{N^*})$. On the other hand, all conditions of Theorem \ref{T6} are satisfied. So, Theorem \ref{T6} concludes then to the required result.
\end{proof}

\begin{proof}[Proof of Theorem \ref{T3}]
By putting $P:=q+1$, $Q:=q$, $R_0=u_0$, and $R_1=u_0+r$, we get $\alpha=q$, $\beta=1$, $U_t={[t]}_{q}:=\frac{{q}^t-1}{q-1}$, and (according to Lemma \ref{eqdeR}) $R_t=u_t$ $(\forall t\in\mathbb{N})$. On the other hand, for any positive integer $t$, we have $R_{t}\geq r{[t]}_{q}$. Combining this with Theorem \ref{T1} and Lemma \ref{estU2}, we obtain (according to \eqref{inc}), for all positive integers $n$ and $m$, with $m\leq \left\lfloor \frac{n}{2}\right\rfloor$:
\begin{align*}
\lcm\left(u_m,u_{m+1},\dots,u_n\right)&\geq \max_{m\leq k\leq n}\left\lbrace\frac{u_ku_{k+1}\cdots u_n}{[1]_q[2]_q\cdots [n-k]_{q}\left(\gcd\left(u_0,u_0+r\right)\right)^{n-k}}\right\rbrace 
\\&\geq \gcd(u_0,r)\max_{m\leq k\leq n}\left\lbrace\left(\frac{r}{\gcd(u_0,r)}\right)^{n-k+1}\frac{{[k]}_{q}{[k+1]}_{q}\cdots {[n]}_{q}}{{[1]}_{q}{[2]}_{q}\cdots {[n-k]}_{q}}\right\rbrace 
\\&=\gcd(u_0,r)\max_{m\leq k\leq n} \left\lbrace \left(\frac{r}{\gcd(u_0,r)}\right)^{n-k+1}{[k]}_{q}{\binom{n}{k}}_{q}\right\rbrace 
\\&\geq \gcd(u_0,r)\max_{m\leq k\leq n}\left\lbrace \left(\frac{r}{\gcd(u_0,r)}\right)^{n-k+1} {q}^{k(n-k-1)-1}\right\rbrace 
\\&\geq \gcd(u_0,r)\left(\frac{r}{\gcd(u_0,r)}\right)^{n-\lfloor n/2\rfloor+1} {q}^{\lfloor n/2\rfloor(n-\lfloor n/2\rfloor-1)-1}
\\&\geq \gcd(u_0,r)\left(\frac{r}{\gcd(u_0,r)}\right)^{n/2+1} {q}^{n^2/4-n/2+\left(n/2-\lfloor n/2\rfloor \right)-\left(n/2-\lfloor n/2\rfloor \right)^2}
\\&\geq \gcd(u_0,r)\left(\frac{r}{\gcd(u_0,r)}\right)^{n/2+1} {q}^{n^2/4-n/2}.
\end{align*}
This confirms the required result and achieves the proof of the theorem.
\end{proof}

\subsection{Proofs of Theorems \ref{T4} and \ref{T7} and their corollaries}

In order to simplify some statements, we set $T:=\log\gcd\left(R_0,R_1\right)$. We shall need the following well-known lemma:

\begin{lemma}[Shorey-Stewart \cite{Shorey}]\label{sho}
There exist two positive constants $c_0$ and $c_1$, which are effectively computable in terms of $a$ and $b$, such that for any integer $t\geq c_1$, we have
\[\left|R_t\right|\geq |\alpha|^{t-c_0\log t}.\]
\end{lemma}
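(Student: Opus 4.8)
The statement to prove is the Shorey--Stewart lemma, which asserts the existence of effectively computable positive constants $c_0$ and $c_1$ (depending on $a$ and $b$) such that $\left|R_t\right|\geq |\alpha|^{t-c_0\log t}$ for all integers $t\geq c_1$. Since the excerpt explicitly attributes this to Shorey and Stewart and labels it "well-known," my plan is to derive it from the fundamental lower bound on the separation between $|a\alpha^t|$ and $|b\beta^t|$, which ultimately rests on Baker's theory of linear forms in logarithms.

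The plan is to first write $R_t = a\alpha^t + b\beta^t = a\alpha^t\left(1 + \frac{b}{a}\left(\frac{\beta}{\alpha}\right)^t\right)$ and reduce the problem to bounding the quantity $\left|1 + \frac{b}{a}(\beta/\alpha)^t\right|$ from below. Here the hypothesis that $\alpha/\beta$ is not a root of unity (assumed throughout the paper) is essential: it guarantees that $\frac{b}{a}(\beta/\alpha)^t$ never equals $-1$, so $R_t\neq 0$, and more importantly it allows the application of a lower bound for linear forms in logarithms to the expression $\Lambda_t := \log\left|\frac{b}{a}\right| + t\log\left|\frac{\beta}{\alpha}\right| + (\text{argument terms})$. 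First I would set $\gamma := \beta/\alpha$, note $|\gamma|\leq 1$, and split into the case $|\gamma|<1$ (where $|b/a||\gamma|^t\to 0$ and the bound is easy for large $t$) and the critical case $|\gamma|=1$ (where $\alpha,\beta$ are complex conjugates of equal modulus).

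The heart of the argument lies in the case $|\gamma|=1$, where $\left|1+\frac{b}{a}\gamma^t\right|$ can be small. Here I would invoke Baker's lower bound: for the linear form in two logarithms $\Lambda_t = \log(b/a) + t\log\gamma$ (taken with appropriate branches, as a nonzero algebraic linear form), one has $|\Lambda_t|\gg t^{-c_0'}$ for an effectively computable $c_0'$ depending on the heights of the algebraic numbers $a,b,\alpha,\beta$ — equivalently on the coefficients, hence on $a$ and $b$. Translating the linear-form bound $|\Lambda_t|\geq t^{-c_0'}$ into a multiplicative bound $\left|1+\frac{b}{a}\gamma^t\right|\geq c\, t^{-c_0'}\geq |\alpha|^{-c_0\log t}$ (for a suitable $c_0$ and all $t\geq c_1$) then yields $|R_t| = |a||\alpha|^t\left|1+\frac{b}{a}\gamma^t\right|\geq |\alpha|^{t - c_0\log t}$, after absorbing the constant $|a|$ into the $c_0\log t$ term for $t$ large.

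The main obstacle is precisely this application of Baker's theorem: one must verify that $\Lambda_t$ is a genuinely nonzero linear form in logarithms of algebraic numbers (using that $\alpha/\beta$ is not a root of unity to rule out degenerate vanishing), and one must extract the \emph{effective} dependence of $c_0$ and $c_1$ on $a$ and $b$ from the standard statement of the theory. Since this is a well-documented result in the literature, I would simply cite \cite{Shorey} for the full quantitative argument rather than reproducing Baker's estimates, and confine my own writing to the elementary reduction $R_t = a\alpha^t(1 + (b/a)\gamma^t)$ together with the observation that the non-root-of-unity hypothesis guarantees the applicability of the transcendence-theoretic lower bound.
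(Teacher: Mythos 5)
Your proposal matches the paper: the paper's entire proof of this lemma is a citation (``This is Lemma 5 of \cite{Shorey}, obtained as a consequence of the Baker method \cite{Baker}''), and you likewise defer the quantitative core to \cite{Shorey}, adding only a correct sketch of the standard reduction $R_t=a\alpha^t\bigl(1+(b/a)(\beta/\alpha)^t\bigr)$ and the application of Baker's lower bound for linear forms in logarithms in the critical case $|\beta|=|\alpha|$. No discrepancy to report.
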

\begin{proof}
This is Lemma 5 of \cite{Shorey}, obtained as a consequence of the Baker method \cite{Baker}.
\end{proof}

\begin{proof}[Proof of Theorem \ref{T4}]
Let $n\geq C:=2(c_1+1)$ be a fixed integer and put $k_n:=\left\lfloor \frac{n}{2}\right\rfloor$. According to Lemma \ref{sho} and the easy inequality $k_n\binom{n}{k_n}\leq n2^n$, we have
\begin{align*}
\log \left|R_{k_n}R_{k_n+1}\cdots R_n\right|&\geq \left((k_n-c_0\log k_n)+\dots +(n-c_0\log n)\right) \log |\alpha|\\&\geq \left(\frac{(n+k_n)(n-k_n+1)}{2} - c_0\log\left(k_n\binom{n}{k_n}\right)\right) \log |\alpha|\\&\geq  \left(\frac{(n+k_n)(n-k_n+1)}{2} - c_0\log n -c_0n\log 2\right) \log |\alpha|.
\end{align*}
On the other hand, since $\left|U_t\right|\leq |\alpha|^t$ $(\forall t\in\mathbb{N^*})$ (by Lemma \ref{terrr}), we have
\begin{align*}
\log \left|U_1U_2\dots U_{n-k_n}\right|&\leq \left(1+2+\dots+(n-k_n)\right)\log |\alpha|\\&\leq \frac{(n-k_n)(n-k_n+1)}{2}\log |\alpha|. 
\end{align*}
Combining this with Theorem \ref{T1}, we get 
\begin{align*}
\log \lcm\left(R_{k_n},R_{k_n+1},\dots,R_n\right)&\geq \log \left|R_{k_n}R_{k_n+1}\cdots R_n\right|-\log \left|U_1U_2\dots U_{n-k_n}\right|-(n-k_n)T\\&\geq \left(k_n(n-k_n+1)- c_0\log n -c_0n\log 2\right) \log |\alpha|-(n-k_n)T\\&\geq \left(\frac{n(n-2)}{4}- c_0\log n -c_0n\log 2\right) \log |\alpha|-\left(n-\left\lfloor \frac{n}{2}\right\rfloor\right)T\\&\geq \left(1+o\left(\frac{1}{n}\right)\right)\frac{\log |\alpha|}{4} n^2.
\end{align*}
This confirms the required result and completes the proof of the theorem.
\end{proof}

\begin{proof}[Proof of Corollary \ref{cor5}]
Since $|\alpha|\geq|\beta|$, then for any positive integer $t$, we have
\begin{equation}\label{eee}
\left|R_t\right|=\left|a{\alpha}^{t}+b{\beta}^{t}\right|\leq |a|{|\alpha|}^t+|b|{|\beta|}^t\leq (|a|+|b|){|\alpha|}^t.
\end{equation} 
Consequently, we have for any $n\in\mathbb{N^*}$:
\[\left|R_1R_2\cdots R_n\right|\leq \left(|a|+|b|\right)^n\left|\alpha\right|^{1+2+\dots+n}=\left(|a|+|b|\right)^n\left|\alpha\right|^{\frac{n(n+1)}{2}}.\]
Hence:
\[\log \left|R_1R_2\cdots R_n\right|\leq n\log \left(|a|+|b|\right)+\frac{n(n+1)}{2}\log\left|\alpha\right|\ll \left(1+o\left(\frac{1}{n}\right)\right)\frac{\log |\alpha|}{4}n^2.\]
The required estimate then follows from Theorem \ref{T4} and \eqref{inc}. This completes the proof.
\end{proof}

\begin{proof}[Proof of Theorem \ref{T7}]
Let $m$ be a fixed positive integer and let us first prove \eqref{T71}. Let also $n$ be a sufficiently large integer such that $R_t\neq 0$ $(\forall t\geq n)$ (the existence of such $n$ is assured by Lemma \ref{sho}). On the one hand, we have
\[\frac{\log \lcm\left(R_n,R_{n+1},\dots,R_{n+m}\right)}{\log \left|R_n R_{n+1}\cdots R_{n+m}\right|}\leq 1,\]
and on the other hand, we have (according to Theorem \ref{T1}):
\[\log\lcm\left(R_n,R_{n+1},\dots,R_{n+m}\right)\geq \log \left|R_n R_{n+1}\cdots R_{n+m}\right|-\log \left|U_1 U_{2}\cdots U_{m}\right|-mT,\]
that is
\[\frac{\log\lcm\left(R_n,R_{n+1},\dots,R_{n+m}\right)}{\log \left|R_n R_{n+1}\cdots R_{n+m}\right|}\geq 1-\frac{\log \left|U_1 U_{2}\cdots U_{m}\right|+mT}{\log \left|R_n R_{n+1}\cdots R_{n+m}\right|}.\]
Since $m$ is fixed and $|R_n|\rightarrow +\infty$ when $n$ goes to infinity, we get
\[1\leq \lim_{n\rightarrow +\infty}\frac{\log \lcm\left(R_n,R_{n+1},\dots,R_{n+m}\right)}{\log \left|R_n R_{n+1}\cdots R_{n+m}\right|}\leq 1,\]
as required. Now, let us show that \eqref{T72} holds. According to \eqref{eee}, we have for any positive integer $t$: $\left|R_t\right|\leq \left(|a|+|b|\right)|\alpha|^t$. Next, it follows from Lemma \ref{sho} that there exist two positive constants $c_0$ and $c_1$, such that $\left|R_t\right|\geq |\alpha|^{t-c_0\log t}$ $(\forall t\geq c_1)$. Consequently, we have for any integer $t\geq c_1$:
\[|\alpha|^{t-c_0\log t}\leq \left|R_t\right|\leq \left(|a|+|b|\right)|\alpha|^t.\]
This implies that for any $n\geq c_1$, we have
\[\log\left|R_n R_{n+1}\cdots R_{n+m}\right|\leq \left(m+1\right)\log\left(|a|+|b|\right)+\frac{(m+1)(2n+m)}{2}\log |\alpha|\]
and
\[\log\left|R_n R_{n+1}\cdots R_{n+m}\right|\geq \frac{(m+1)(2n+m)}{2}\log |\alpha|+O_{m}(\log n),\]
since:
\[\log \left(n\left(n+1\right)\cdots\left(n+m\right)\right)\leq \left(m+1\right)\log \left(n+m\right)\ll_m \log n.\]
Hence:
\[\log\left|R_n R_{n+1}\cdots R_{n+m}\right|\sim_{+\infty}n(m+1)\log |\alpha|.\]
Formula \eqref{T72} then follows from the last estimate and \eqref{T71}. This completes the proof.
\end{proof}

\begin{proof}[Proof of Corollary \ref{fiii}]
This follows by applying Theorem \ref{T7} for $(P,Q)=(1,-1)$ and $(R_0,R_1)=(0,1)$.
\end{proof}

\subsection{The rest of the proofs}

\begin{proof}[Proof of Theorem \ref{co}]
By putting $R_0=0$ and $R_1=1$, we have (from Lemma \ref{eqdeR}) $R_t=U_t$ $(\forall t\in\mathbb{N})$. The left-hand side of \eqref{id} is then equal to $L_{n-k+1,n}$. Let $B_{k,n}$ denote the right-hand side of \eqref{id}. So, we have to show that $L_{n-k+1,n}=B_{k,n}$. To do so, we first show that $B_{k,n}$ divides $L_{n-k+1,n}$ and then that $L_{n-k+1,n}$ divides $B_{k,n}$. According to Theorem \ref{T1}, for any $m\in\lbrace 1,2,\dots,k\rbrace$, the number ${L}_{n-m+1,n}$ is a multiple of the integer
\[\frac{U_{n-m+1}U_{n-m+2}\cdots U_n}{{[m-1]}_{\boldsymbol{U}}!}=U_m{\binom{n}{m}}_{\boldsymbol{U}}.\]
On the other hand, the number $L_{n-k+1,n}$ is a multiple of $L_{n-m+1,n}$ ($\forall m\in\lbrace 1,2,\dots,k\rbrace$), since $n-m+1\geq n-k+1$. This implies that ${L}_{n-k+1,n}$ is a multiple of each $U_m{\binom{n}{m}}_{\boldsymbol{U}}$ ($m=1,2,\dots,k$); that is $B_{k,n}$ divides $L_{n-k+1,n}$, as required. Next, observe that for any $m\in\lbrace 1,2,\dots,k\rbrace$, we have
\[U_m{\binom{n}{m}}_{\boldsymbol{U}}=U_{n-m+1}{\binom{n}{m-1}}_{\boldsymbol{U}}.\]
Consequently, $U_{n-m+1}$ divides $B_{k,n}$ ($\forall m\in\lbrace 1,2,\dots,k\rbrace$); that is $L_{n-k+1,n}$ divides $B_{k,n}$, as required. This completes the proof. 
\end{proof}

\begin{proof}[Proof of Corollary \ref{co2}]
This is an immediate consequence of Theorem \ref{co} and the obvious identity: $U_m{\binom{n}{m}}_{\boldsymbol{U}}=U_n{\binom{n-1}{m-1}}_{\boldsymbol{U}}$ $(\forall m,n\in\mathbb{N},~1\leq m\leq n)$.
\end{proof}

\begin{proof}[Proof of Theorem \ref{tri}]
Let us prove the first part of Theorem \ref{tri}. Since $\left(U_t\right)_t$ is a divisibility sequence, we have $U_d\mid U_m$ whenever $d\mid m$ ($\forall d,m\in\mathbb{N^*}$). On the other hand, for a given positive integer $m$ and given consecutive integers $x,x+1,\dots,x+m-1$, there exists at least one $i\in\lbrace 0,1,\dots,m-1\rbrace$ such that $m$ divides $x+i$ (since: $x,x+1,\dots,x+m-1$ are pairwise distinct modulo $m$). So, by using these two facts, every $i\in\lbrace 1,2,\dots,k\rbrace$ will be a divisor of some $m\in\lbrace n-k+1,n-k+2,\dots,n\rbrace$; so, $U_i$ divides $U_m$, which is clearly a divisor of $L_{n-k+1,n}:=\lcm\left(U_n,U_{n-1},\dots,U_{n-k+1}\right)$. Hence: $\lcm\left(U_1,U_{2},\dots,U_{k}\right)$ divides $L_{n-k+1,n}$, as required. Next, the second part of Theorem \ref{tri} is equivalent to saying that:
\[{\binom{n}{k}}_{\boldsymbol{U}}\lcm\left(U_1,U_2,\dots,U_k\right)~~\text{is a multiple of}~~\lcm\left(U_n,U_{n-1},\dots,U_{n-k+1}\right);\]
that is:
\[U_{n-i}~~\text{divides}~~{\binom{n}{k}}_{\boldsymbol{U}}\lcm\left(U_1,U_2,\dots,U_k\right)~~~~(\forall i\in\lbrace 0,1,\dots,k-1\rbrace).\]
For $k=n$, the required result is trivial. Suppose for the sequel that $n>k$. So, we have to show that for any $i\in\lbrace 0,1,\dots,k-1\rbrace$, the number
\[{\binom{n}{k}}_{\boldsymbol{U}}\frac{\lcm\left(U_1,U_2,\dots,U_k\right)}{U_{n-i}}=\frac{\left(U_nU_{n-1}\cdots U_{n-i+1}\right)\left(U_{n-i-1}\cdots U_{n-k+1}\right)}{U_1,U_2,\dots,U_k}\lcm\left(U_1,U_2,\dots,U_k\right),\]
is an integer. Let $i\in\lbrace 0,1,\dots,k-1\rbrace$ be fixed. Since the numbers $\left(U_nU_{n-1}\cdots U_{n-i+1}\right)$ and $\left(U_{n-i-1}U_{n-i-2}\cdots U_{n-k+1}\right)$ are respectively multiples of ${[i]}_{\boldsymbol{U}}!$ and ${[k-i-1]}_{\boldsymbol{U}}!$ (because $\frac{U_nU_{n-1}\cdots U_{n-i+1}}{{[i]}_{\boldsymbol{U}}!}={\binom{n}{i}}_{\boldsymbol{U}}$ and $\frac{U_{n-i-1}U_{n-i-2}\cdots U_{n-k+1}}{{[k-i-1]}_{\boldsymbol{U}}!}={\binom{n-i-1}{k-i-1}}_{\boldsymbol{U}}$), it suffices to show that the number
\[\frac{{[i]}_{\boldsymbol{U}}!{[k-i-1]}_{\boldsymbol{U}}!}{{[k]}_{\boldsymbol{U}}!}\lcm\left(U_1,U_2,\dots,U_k\right)=\frac{\lcm\left(U_1,U_2,\dots,U_k\right)}{U_{k-i}{\binom{k}{i}}_{\boldsymbol{U}}}\]
is an integer. According to Theorem \ref{T1} (applied for $R_0=0$ and $R_1=1$), the number $L_{i+1,k}:=\lcm\left(U_{i+1},U_{i+2},\dots,U_{k}\right)$ is a multiple of 
\[\frac{U_{i+1}U_{i+2}\cdots U_{k}}{{[k-i-1]}_{\boldsymbol{U}}!}=\frac{{[k]}_{\boldsymbol{U}}!}{{[i]}_{\boldsymbol{U}}!{[k-i-1]}_{\boldsymbol{U}}!}=U_{k-i}{\binom{k}{i}}_{\boldsymbol{U}}.\]
The required result then follows by observing that $\lcm\left(U_1,U_2,\dots,U_k\right)$ is clearly a multiple of $L_{i+1,k}$. This completes the proof. 
\end{proof}

\begin{proof}[Proof of Corollary \ref{derrr}]
Let $n\in\mathbb{N^*}$ be fixed. From Theorem \ref{tri}, we have 
\[\lcm\left(U_1,U_{2},\dots,U_{\left\lceil\frac{n}{2}\right\rceil}\right)~~\text{divides}~~\lcm\left(U_n,U_{n-1},\dots,U_{n-\left\lceil\frac{n}{2}\right\rceil+1}\right).\]
This implies that:
\begin{align*}
\lcm\left(U_n,U_{n-1},\dots,U_{n-\left\lceil\frac{n}{2}\right\rceil+1}\right)&=\lcm\left(U_n,U_{n-1},\dots,U_{n-\left\lceil\frac{n}{2}\right\rceil+1};U_{\left\lceil\frac{n}{2}\right\rceil},U_{\left\lceil\frac{n}{2}\right\rceil-1},\dots,U_1\right)\\&=\lcm\left(U_1,U_{2},\dots,U_{n}\right),
\end{align*}
since the integers $\left(n-\left\lceil\frac{n}{2}\right\rceil+1\right)$ and $\left\lceil\frac{n}{2}\right\rceil$ are consecutive or equal. This completes the proof. 
\end{proof}

\section{Concluding remarks and open questions}

When we replace the Lucas sequence $\left(U_n\right)_n$ by the sequence $(n)_n$, the numbers ${\bi{n}{k}}_{\boldsymbol{U}}$ defined in Theorem \ref{tri} become the so-called \textit{$\lcm$-binomial numbers}. These numbers are previously studied by Farhi \cite{lcmbinom}, which raised some open problems looking at their analogy with the usual binomial coefficients. For example, Farhi \cite{lcmbinom} proved that for all positive integers $k$ and $n$ such that $n\geq k$, the number
\[\bi{n}{k}:=\frac{\lcm\left(n,n-1,\dots,n-k+1\right)}{\lcm\left(1,2,\dots,k\right)},\]
is an integer and divides $\binom{n}{k}$. But since the proof presented by Farhi investigates the $p$-adic valuation of these numbers, the later asked if we can find an alternative proof, which do not use the prime number arguments. In fact, we can easily adapt the proofs of Theorem \ref{T1} and Theorem \ref{tri} to the sequence $R_n=n$ ($\forall n\geq 1$), and getting an answer to this question. Here we list some open problems, which sometimes generalize the problems given in \cite{lcmbinom}.
\begin{enumerate}
\item We arise the general question to determine all the pairs $(k,n)$ of positive integers such that $n\geq k$ and ${\bi{n}{k}}_{\boldsymbol{U}}={\binom{n}{k}}_{\boldsymbol{U}}$.
\item From the theory of binomial coefficients, it is well known that: 
\[\binom{n}{k}=\binom{n-1}{k-1}+\binom{n-1}{k}~~~~(\forall n,k\in\mathbb{N^*},~n\geq k),\]
and then this presents a useful way to construct the binomial coefficients. We ask more generally if there exists a similar formula allowing us to construct the numbers ${\bi{n}{k}}_{\boldsymbol{U}}$.    
\item The binomial theorem tells us that: $\sum_{k=1}^{n}\binom{n}{k}x^k=(x+1)^n$. So, is there an analog formula for the sum $\sum_{k=1}^{n}{\bi{n}{k}}_{\boldsymbol{U}}x^k$?
\item In the context of Corollary \ref{cor5}, we have for $n$ sufficiently large:
\[\log \lcm\left(R_1,R_2,\dots,R_n\right)\gg\log \left|R_1R_2\cdots R_n\right|.\]
We believe that the following limit exists:
\[\lim_{n\rightarrow +\infty}\frac{\log\left|R_1R_2\cdots R_n\right|}{\log\lcm\left(R_1,R_2,\dots,R_n\right)},\]
and we ask to find its value.
\end{enumerate}

\end{document}